\documentclass[a4paper]{amsart}

\usepackage[utf8]{inputenc}
\usepackage[T1]{fontenc}
\usepackage{amsthm}
\usepackage{xspace}
\usepackage{amssymb,amsmath,stmaryrd}
\usepackage[english]{babel}
\usepackage[shortlabels]{enumitem}
\usepackage[all]{xy}
\usepackage{xcolor}
\usepackage{hyperref}


\theoremstyle{plain}
\newtheorem{theorem}{Theorem}[section]
\newtheorem{remark}[theorem]{Remark}
\newtheorem{lemma}[theorem]{Lemma}

\newtheorem{proposition}[theorem]{Proposition}

\theoremstyle{definition}

\newtheorem{definition}[theorem]{Definition}
\newtheorem{notation}[theorem]{Notation}
\newtheorem{example}[theorem]{Example}
\numberwithin{equation}{section}


\newcommand{\ca}{\mbox{$C\sp*$-}al\-ge\-bra\xspace}
\newcommand{\cas}{\mbox{$C\sp*$-}al\-ge\-bras\xspace}
\newcommand{\starhomo}{\mbox{$\sp*$-}ho\-mo\-morphism\xspace}

\newcommand{\stariso}{\mbox{$\sp*$-}iso\-morphism\xspace}

\newcommand{\Z}{\ensuremath{\mathbb{Z}}\xspace}
\newcommand{\N}{\ensuremath{\mathbb{N}}\xspace}

\newcommand{\K}{\ensuremath{\mathbb{K}}\xspace}
\newcommand{\ie}{\emph{i.e.}\xspace}
\newcommand{\cf}{\emph{cf.}\xspace}


\newcommand{\kk}{\ensuremath{\operatorname{KK}}\xspace}

\newcommand{\A}{\ensuremath{\mathfrak{A}}\xspace}

\newcommand{\Asf}{\mathsf{A}}
\newcommand{\Bsf}{\mathsf{B}}
\newcommand{\Esf}{\mathsf{E}}

\newcommand{\setof}[2]{\left\{ #1 \;\middle|\; #2 \right\}}

\newcommand{\Meq}{\ensuremath{\sim_{M\negthinspace E}}\xspace}

\newcommand{\OO}{\mbox{\texttt{\textup{(O)}}}\xspace}
\newcommand{\CO}{\mbox{\texttt{\textup{(Col)}}}\xspace}
\newcommand{\II}{\mbox{\texttt{\textup{(I)}}}\xspace}
\newcommand{\RR}{\mbox{\texttt{\textup{(R)}}}\xspace}
\newcommand{\SSS}{\mbox{\texttt{\textup{(S)}}}\xspace}
\newcommand{\CC}{\mbox{\texttt{\textup{(C)}}}\xspace}



\title{Invariance of the Cuntz splice}
\date{\today}
\author{S\o{}ren Eilers}
\address{Department of Mathe\-matical Sciences, University of Copen\-hagen, Universi\-tets\-park\-en~5, DK-2100 Copen\-hagen, Den\-mark}
\email{eilers@math.ku.dk}
\author{Gunnar Restorff}
\address{Department of Science and Technology, University of the Faroe Islands, N\'{o}at\'{u}n~3, FO-100 T\'{o}rshavn, the Faroe Islands}
\email{gunnarr@setur.fo}
\author{Efren Ruiz}
\address{Department of Mathematics, University of Hawaii, Hilo, 200 W.~Kawili St., Hilo, Hawaii, 96720-4091 USA}
\email{ruize@hawaii.edu}
\author{Adam P.~W.~S\o{}rensen}
\address{Department of Mathematics, University of Oslo, PO BOX 1053 Blindern, N-0316 Oslo, Norway}
\email{apws@math.uio.no}
\keywords{Graph $C^*$-algebras, Cuntz splice}
\subjclass[2010]{46L35, 46L80 (46L55, 37B10)}


\begin{document}


\begin{abstract}
We show that the Cuntz splice induces stably isomorphic graph $C^*$-algebras.
\end{abstract}

\maketitle

\section{Introduction}

Cuntz and Krieger introduced the Cuntz-Krieger algebras in \cite{MR561974}, and Cuntz showed in \cite{MR608527} that if we restrict to the matrices satisfying the modest condition (II), then the stabilized Cuntz-Krieger algebras are an invariant of shifts of finite type up to flow equivalence. Shortly after Franks had made a successful classification of irreducible shifts of finite type up to flow equivalence (\cite{MR758893}), Cuntz raised the question of whether this invariant or the $K_0$-group alone classifies simple Cuntz-Krieger algebras up to stable isomorphism. He sketched in \cite{MR866492} that it was enough to answer whether $\mathcal{O}_2$ and $\mathcal{O}_{2_-}$ are isomorphic, where $\mathcal{O}_2$ and $\mathcal{O}_{2_-}$ are the Cuntz-Krieger algebras associated with the matrices
$$\begin{pmatrix} 1 & 1 \\ 1 & 1 \end{pmatrix}\quad\text{and}\quad\begin{pmatrix} 1 & 1 & 0 & 0 \\ 1 & 1 & 1 & 0 \\ 0 & 1 & 1 & 1 \\ 0 & 0 & 1 & 1 \end{pmatrix},$$
respectively. This question remained open until R\o{}rdam in \cite{MR1340839} showed that $\mathcal{O}_2$ and $\mathcal{O}_{2_-}$ are in fact isomorphic and elaborated on the arguments of Cuntz to show that the $K_0$-group is a complete invariant of the stabilized simple Cuntz-Krieger algebras. 

This procedure of gluing the graph corresponding to the former matrix above onto another graph has since been known as \emph{Cuntz splicing} a graph at a certain vertex. Knowing that when we Cuntz splice a graph (on a vertex that supports two return paths), we get stably isomorphic Cuntz-Krieger algebras, has been important for classifying Cuntz-Krieger algebras (\cite{MR1340839,MR1329907,MR2270572}), as well as understanding the connection between the dynamics of the underlying shift spaces and the Cuntz-Krieger algebras.  With the recent work on the relation between move equivalence of graphs and stable isomorphism of the corresponding graph \cas, the question of whether Cuntz splicing yields stably isomorphic \cas has become of great interest. Bentmann has shown that this is in fact the case for purely infinite graph \cas with finitely many ideals (\cite{arXiv:1510.06757v2}), while Gabe recently has generalized this to also cover general purely infinite graph \cas (\cite{GabePrivate2016}). Their methods depend heavily on the result of Kirchberg on lifting invertible ideal-related \kk-elements to equivariant isomorphisms for strongly purely infinite \cas (\cite{MR1796912}).

In this paper we show in general that Cuntz splicing a vertex that supports two distinct return paths yields stably isomorphic graph \cas{} --- only assuming that the graph is countable. The results, the proofs and methods of this paper are important for recent development in the geometric classification of general Cuntz-Krieger algebras and of unital graph \cas (\cite{Eilers-Restorff-Ruiz-Sorensen-1}, \cite{Eilers-Restorff-Ruiz-Sorensen-2}) as well as for the question of strong classification of general Cuntz-Krieger algebras and of unital graph \cas (\cite{Carlsen-Restorff-Ruiz}, \cite{Eilers-Restorff-Ruiz-Sorensen-2}).  

We proved invariance of the Cuntz splice in the special case of unital graph \cas in an arXiv preprint (1505.06773) posted in May 2015. Bentmann's recent paper showed us how to reduce the general question to the row-finite case, and we proceeded to discover that our arguments applied with only minor changes to that case. Since most of the results of our preprint have since been superseded by other forthcoming work, we do not intend to publish it, whereas this work is intended for publication.

\section{Preliminaries}

\begin{definition}
A graph $E$ is a quadruple $E = (E^0 , E^1 , r, s)$ where $E^0$ and $E^1$ are sets, and $r$ and $s$ are maps from $E^1$ to $E^0$. 
The elements of $E^0$ are called \emph{vertices}, the elements of $E^1$ are called \emph{edges}, the map $r$ is called the \emph{range map}, and the map $s$ is called the \emph{source map}. 
\end{definition}

When working with several graphs at the same time, to avoid confusion, we will denote the range map and source map of a graph $E$ by $r_E$ and $s_E$ respectively.

All graphs considered will be \emph{countable}, \ie, there are countably many vertices and edges. 

\begin{definition}
A \emph{loop} is an edge with the same range and source. 

A \emph{path} $\mu$ in a graph is a finite sequence $\mu = e_1 e_2 \cdots e_n$ of edges satisfying 
$r(e_i)=s(e_{i+1})$, for all $i=1,2,\ldots, n-1$, and we say that the \emph{length} of $\mu$ is $n$. 
We extend the range and source maps to paths by letting $s(\mu) = s(e_1)$ and $r(\mu) = r(e_n)$. 
Vertices in $E$ are regarded as \emph{paths of length $0$} (also called empty paths). 

A \emph{cycle} is a nonempty path $\mu$ such that $s(\mu) = r(\mu)$.
We call a cycle $e_1e_2\cdots e_n$ a \emph{vertex-simple cycle} if $r(e_i)\neq r(e_j)$ for all $i\neq j$. A vertex-simple cycle $e_1e_2\cdots e_n$ is said to have an \emph{exit} if there exists an edge $f$ such that $s(f)=s(e_k)$ for some $k=1,2,\ldots,n$ with $e_k\neq f$. 
A \emph{return path} is a cycle $\mu = e_1 e_2 \cdots e_n$ such that $r(e_i) \neq r(\mu)$ for $i < n$.

For a loop, cycle or return path, we say that it is \emph{based} at the source vertex of its path. 
We also say that a vertex \emph{supports} a certain loop, cycle or return path if it is based at that vertex. 

Note that in \cite{MR1988256,MR1914564}, the authors use the term \emph{loop} where we use \emph{cycle}.
\end{definition}

\begin{definition}
A vertex $v$ in $E$ is called \emph{regular} if $s^{-1}(v)$ is finite and nonempty.  We denote the set of regular vertices by $E_{\mathrm{reg}}^0$.

A vertex $v$ in $E$ is called a \emph{sink} if $s^{-1}(v)=\emptyset$.  A graph $E$ is called \emph{row-finite} if for each $v \in E^0$, $v$ is either a sink or a regular vertex.
\end{definition}

It is essential for our approach to graph \cas to be able to shift between a graph and its adjacency matrix. 
In what follows, we let \N denote the set of positive integers, while $\N_0$ denotes the set of nonnegative integers.

\begin{definition}
Let $E = (E^0 , E^1 , r, s)$ be a graph.
We define its \emph{adjacency matrix} $\Asf_E$ as a $E^0\times E^0$ matrix with the $(u,v)$'th entry being
$$\left\vert\setof{e\in E^1}{s(e)=u, r(e)=v}\right\vert.$$
As we only consider countable graphs, $\Asf_E$ will be a finite matrix or a countably infinite matrix, and it will have entries from $\N_0\sqcup\{\infty\}$.

Let $X$ be a set.
If $A$ is an $X \times X$ matrix with entries from $\N_0\sqcup\{\infty\}$, we let $\Esf_{A}$ be the graph with vertex set $X$ such that between two vertices $x,x' \in X$ we have $A(x,x')$ edges.
\end{definition}

It will be convenient for us to alter the adjacency matrix of a graph in a very specific way, subtracting the identity, so we introduce notation for this. 

\begin{notation}
Let $E$ be a graph and $\Asf_E$ its adjacency matrix.  Let $\Bsf_E$ denote the matrix $\Asf_{E} - I$.
\end{notation}

\subsection{Graph \texorpdfstring{$C^*$}{C*}-algebras}
We follow the notation and definition for graph \cas in \cite{MR1670363}; this is not the convention used in Raeburn's monograph \cite{MR2135030}. 

\begin{definition} \label{def:graphca}
Let $E = (E^0,E^1,r,s)$ be a graph.
The \emph{graph \ca} $C^*(E)$ is defined as the universal \ca generated by
a set of mutually orthogonal projections $\setof{ p_v }{ v \in E^0 }$ and a set $\setof{ s_e }{ e \in E^1 }$ of partial isometries satisfying the relations
\begin{itemize}
	\item $s_e^* s_f = 0$ if $e,f \in E^1$ and $e \neq f$,
	\item $s_e^* s_e = p_{r(e)}$ for all $e \in E^1$,
	\item $s_e s_e^* \leq p_{s(e)}$ for all $e \in E^1$, and,
	\item $p_v = \sum_{e \in s^{-1}(v)} s_e s_e^*$ for all $v \in E^0$ with $0 < |s^{-1}(v)| < \infty$.
\end{itemize}
Whenever we have a set of mutually orthogonal projections $\setof{ p_v }{ v \in E^0 }$ and a set $\setof{ s_e }{ e \in E^1 }$ of partial isometries in a \ca satisfying the relations, then we call these elements a \emph{Cuntz-Krieger $E$-family}. 
\end{definition}

We will also need moves on graphs as defined in \cite{MR3082546}.  In the case of graphs with finitely many vertices the basic moves are outsplitting (Move \OO), insplitting (Move \II), reduction (Move \RR), and removal of a regular source (Move \SSS). It turns out that in the general setting, move \RR must be replaced by the following

\begin{definition}[Collapse a regular vertex that does not support a loop, Move \CO ] \label{def:collapse}
Let $E = (E^0 , E^1 , r , s )$ be a graph and let $v$ be a regular vertex in $E$ that does not support a loop. 
Define a graph $E_{COL}$ by 
\begin{align*}
E_{COL}^0 &= E^0 \setminus \{v\}, \\
E_{COL}^1 &= E^1 \setminus (r^{-1}(v) \cup s^{-1}(v))
\sqcup \setof{[ef ]}{e \in r^{-1}(v)\text{ and }f \in s^{-1}(v)},
\end{align*}
the range and source maps extends those of $E$, and satisfy $r_{E_{COL}}([ef ]) = r (f )$
and $s_{E_{COL}} ([ef ]) = s (e)$.
\end{definition}

Move  \CO was defined in \cite[Theorem~5.2]{MR3082546} for graphs with finitely many vertices as an auxiliary move, and proved there to be realized by moves \II, \OO and \RR. 

\begin{definition}
The equivalence relation generated by the moves \OO, \II, \SSS, \CO together with graph isomorphism is called \emph{move equivalence}, and denoted \Meq. 

Let $X$ be a set and let $A$ and $A'$ be $X \times X$ matrices with entries from $\N_0\sqcup\{\infty\}$.  If  $\Esf_A \Meq \Esf_{A'}$, then we say that $A$ and $A'$ are \emph{move equivalent}, and we write $A \Meq A'$.
\end{definition}

\begin{remark}
 By \cite[Theorem~5.2]{MR3082546}, the above definition is equivalent to the definition in \cite[Section~4]{MR3082546} for graphs with finitely many vertices.
\end{remark}

These moves have been considered by other authors, and were previously noted to preserve the Morita equivalence class of the associated graph \ca.  The moves \OO and \II induce stably isomorphic $C^*$-algebras due to the results in \cite{MR2054048}, and by \cite{MR2215769}, moves \RR, \SSS, \CO preserve the Morita equivalence class of the associated graph \cas (see also \cite[Propositions~3.1, 3.2 and 3.3 and Theorem~3.5]{MR3082546}).  Therefore, we get the following theorem.

\begin{theorem}\label{thm:moveimpliesstableisomorphism}
Let $E_1$ and $E_2$ be graphs such that $E_1\Meq E_2$. 
Then $C\sp*(E_1)\otimes \K\cong C\sp*(E_2)\otimes \K$. 
\end{theorem}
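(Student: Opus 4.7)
The plan is to reduce the statement to showing that each of the generating moves preserves stable isomorphism, and then to cite the appropriate result from the literature for each. Since \Meq{} is by definition the equivalence relation generated by graph isomorphism together with moves \OO, \II, \SSS, and \CO, and since stable isomorphism of \cas is itself an equivalence relation, it suffices to establish the conclusion in the case that $E_2$ is obtained from $E_1$ by a single such operation.

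First I would dispose of graph isomorphism: if $\phi \colon E_1 \to E_2$ is a graph isomorphism, then the image under $\phi$ of a Cuntz--Krieger $E_1$-family is a Cuntz--Krieger $E_2$-family, so by the universal property in Definition~\ref{def:graphca} we obtain $C^*(E_1) \cong C^*(E_2)$, and stably isomorphic \cas follow at once.

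For moves \OO and \II, I would invoke the results of \cite{MR2054048}, which as recalled in the paragraph preceding the theorem show that out-splitting and in-splitting a countable graph yield stably isomorphic graph \cas. For moves \SSS and \CO, I would invoke \cite{MR2215769} (and \cite[Propositions~3.1, 3.2, 3.3]{MR3082546}) to see that removing a regular source and collapsing a regular vertex not supporting a loop both produce graph \cas that are Morita equivalent to the original. Since all our graphs are countable, all the graph \cas involved are separable, and so the Brown--Green--Rieffel theorem upgrades Morita equivalence to stable isomorphism, yielding $C^*(E_1) \otimes \K \cong C^*(E_2) \otimes \K$ in each of these cases.

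The only real subtlety is confirming that each of the cited results is valid in full countable generality rather than only for graphs with finitely many vertices; this is where the author's choice to replace Move \RR by Move \CO in the general setting becomes important, as the reference \cite{MR2215769} handles Move \CO directly, so no finiteness hypothesis is needed to invoke it. Combining these pieces with transitivity of stable isomorphism completes the argument.
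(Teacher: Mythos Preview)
Your proposal is correct and follows essentially the same route as the paper: the theorem is justified there by the paragraph immediately preceding it, which cites \cite{MR2054048} for moves \OO{} and \II{} and \cite{MR2215769} (together with \cite{MR3082546}) for moves \SSS{} and \CO{}. You are slightly more explicit in handling graph isomorphism and in invoking the Brown--Green--Rieffel theorem to pass from Morita equivalence to stable isomorphism via separability, but this is only added detail rather than a different approach.
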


We now recall the definiton of the Cuntz splice (see Notation \ref{notation:OnceAndTwice-new} and Example \ref{example:cuntz-splice-new} for illustrations).

\begin{definition}[Move \CC: Cuntz splicing at a regular vertex supporting two return paths] \label{def:cuntzsplice}
Let $E = (E^0 , E^1 , r , s )$ be a graph and let $v \in E^0$ be a regular vertex that supports at least two return paths.
Let ${E_{v,-}}$ denote the graph $(E_{v,-}^0 , E_{v,-}^1 , r_{v,-}, s_{v,-})$ defined by 
\begin{align*}
{E^0_{v,-}} &:= E^0\sqcup\{u_1 , u_2 \} \\
{E^1_{v,-}} &:= E^1\sqcup\{e_1 , e_2 , f_1 , f_2 , h_1 , h_2 \},
\end{align*}
where $r_{v,-}$ and $s_{v,-}$ extend $r$ and $s$, respectively, and satisfy
$$s_{v,-} (e_1 ) = v,\quad s_{v,-} (e_2 ) = u_1 ,\quad s_{v,-} (f_i ) = u_1 ,\quad s_{v,-} (h_i ) = u_2 ,$$
and
$$r_{v,-} (e_1 ) = u_1 ,\quad r_{v,-} (e_2 ) = v,\quad r_{v,-} (f_i ) = u_i ,\quad r_{v,-} (h_i ) = u_i . $$
We call ${E_{v,-}}$ the \emph{graph obtained by Cuntz splicing $E$ at $v$}, and say ${E_{v,-}}$ is formed by performing Move \CC to $E$. 
\end{definition}

The  aim of this paper is to prove that  $C\sp*(E)\otimes \K\cong C\sp*({E_{v,-}})\otimes \K$ for any graph $E$. In fact, we prove slightly more, since our proof allows for Cuntz splicing also at infinite emitters supporting at least two return paths.

\section{Elementary matrix operations preserving move equivalence}\label{addmoves}

In this section we perform row and column additions on $\Bsf_E$ without changing the move equivalence class of the associated graphs. 
Our setup is slightly different from what was considered in \cite[Section 7]{MR3082546}, so we redo the proofs from there in our setting. 
There are no substantial changes in the proof techniques, which essentially go back to \cite{MR758893}.

\begin{lemma} \label{lem:oneStepColumnAdd}
Let $E = (E^0, E^1, r_E,s_E)$ be a graph. 
Let $u,v \in E^0$ be distinct vertices. 
Suppose the $(u,v)$'th entry of $\Bsf_E$ is nonzero (\ie, there is an edge from $u$ to $v$), and that the sum of the entries in the $u$'th row of $\Bsf_E$ is strictly greater than 0 (\ie, $u$ emits at least two edges). 
If $B'$ is the matrix formed from $\Bsf_E$ by adding the $u$'th column into the $v$'th column, then 
$$\Asf_E \Meq B' + I. $$
\end{lemma}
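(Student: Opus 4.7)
The plan is to realise the column addition by a single application of Move \OO followed by a single application of Move \CO. Since $u \neq v$, the hypothesis $\Bsf_E(u,v) \neq 0$ gives at least one edge $e_1 \in s_E^{-1}(u) \cap r_E^{-1}(v)$, while the positivity of the $u$th row sum of $\Bsf_E$ forces $s_E^{-1}(u)\setminus\{e_1\}$ to be nonempty (in other words, $u$ emits at least two edges, possibly infinitely many).

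First I would out-split $u$ using the partition $s_E^{-1}(u) = \{e_1\}\sqcup(s_E^{-1}(u)\setminus\{e_1\})$ into a one-edge piece and a nonempty (possibly infinite) piece; this is a legitimate instance of Move \OO. In the resulting graph $E'$, the vertex $u$ has been split into a vertex $u_1$ emitting a single edge with range $v$ and a vertex $u_2$ carrying all of $u$'s other outgoing edges. Crucially, each edge with range $u$ in $E$ is duplicated in $E'$, one copy ending at $u_1$ and one at $u_2$. Next I would apply Move \CO to $u_1$, which is legal because $u_1$ is a regular vertex whose unique outgoing edge has range $v \neq u_1$, so $u_1$ does not support a loop. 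Let $E''$ be the resulting graph; by construction $E \Meq E''$.

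The final step is to verify, after relabelling $u_2$ as $u$, that $\Asf_{E''} = B' + I$. All entries outside column $v$ are manifestly unchanged, so attention focuses on column $v$: each duplicated copy $f^{(1)}$ of an incoming edge $f$ to $u$ in $E$ becomes, after collapse, an edge $[f^{(1)}e_1]$ from $s_E(f)$ to $v$ in $E''$. A direct count yields
\[
\Asf_{E''}(i,v) = \Asf_E(i,v) + \Asf_E(i,u) - \delta_{i,u} \quad \text{for all } i,
\]
and expanding $B'(i,v) = \Bsf_E(i,v) + \Bsf_E(i,u)$ together with $\Bsf_E = \Asf_E - I$ shows that the right-hand side is precisely $(B'+I)(i,v)$.

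The main delicacy lies in the case $\Asf_E(u,u) \geq 1$: each loop at $u$ splits under Move \OO into two edges out of $u_2$, one landing at $u_1$ and one remaining as a loop at $u_2$; only the former contributes, after collapse, to the $(u,v)$ entry, and this is what makes the diagonal bookkeeping at position $(u,v)$ produce the correct value $\Asf_E(u,v) + \Asf_E(u,u) - 1$. Once this observation is in hand, the verification is entirely routine and mirrors the finite-graph argument given in Section~7 of~\cite{MR3082546}.
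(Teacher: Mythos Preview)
Your proposal is correct and follows essentially the same route as the paper's proof: fix one edge $e_1$ from $u$ to $v$, out-split $u$ along the partition $\{e_1\}\sqcup(s_E^{-1}(u)\setminus\{e_1\})$, then collapse the resulting regular non-loop vertex $u_1$. Your entry-by-entry verification with the Kronecker delta is slightly more explicit than the paper's narrative check of the $(u,v)$ entry, and your remark about loops at $u$ splitting correctly under Move \OO is a helpful gloss the paper leaves implicit, but the argument is the same.
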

\begin{proof}
Fix an edge $f$ from $u$ to $v$. 
Form a graph $G$ from $E$ by removing $f$ but adding for each edge $e \in r_E^{-1}(u)$ an edge $\bar{e}$ with $s_G(\bar{e}) = s_E(e)$ and $r_G(\bar{e}) = v$. 
We claim that $B' = \Bsf_G$. 
At any entry other than the $(u,v)$'th entry the two matrices have the same values, since we in both cases add entries into the $v$'th column that are exactly equal to the number of edges in $E$. 
At the $(u,v)$'th entry of $\Bsf_G$ we have
\begin{align*}
	(|s_E^{-1}(u) \cap r_E^{-1}(v)| - 1) + |s_E^{-1}(u) \cap r_E^{-1}(u)| &= \Bsf_E(u,v) + \Bsf_E(u,u) = B'(u,v).
\end{align*}
Thus to prove this lemma it suffices to show $E \Meq G$.

Partition $s_E^{-1}(u)$ as $\mathcal{E}_1 = \{ f \}$ and $\mathcal{E}_2 = s_E^{-1}(u) \setminus \{ f \}$. 
By assumption $\mathcal{E}_2$ is not empty, so we can use Move \OO. 
Doing so yields a graph just as $E$ but where $u$ is replaced by two vertices, $u_1$ and $u_2$.
The vertex $u_1$ receives a copy of everything $u$ did and it emits only one edge.  
That edge has range $v$. 
The vertex $u_2$ also receives a copy of everything $u$ did, and it emits everything $u$ did, except $f$. 
Since $u_1$ is regular and not the base of a loop, we can collapse it.
The resulting graph is $G$ (after we relabel $u_2$ as $u$), so $G \Meq E$.
\end{proof}

We can also add columns along a path. 

\begin{proposition} \label{prop:columnAdd}
Let $E = (E^0, E^1, r_E,s_E)$ be a graph and let $u,v \in E^0$ be distinct vertices with a path from $u$ to $v$ going through distinct vertices $u = u_0, u_1, u_2, \ldots, u_n = v$ (labelled so there is an edge from $u_i$ to $u_{i+1}$ for $i=0,1,2 \ldots,n-1$).  Suppose further that $u$ supports a loop.  If $B'$ is the matrix formed from $\Bsf_E$ by adding the $u$'th column into the $v$'th column, then 
\[
	\Asf_E \Meq B' + I. 
\]
\end{proposition}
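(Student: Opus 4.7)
The plan is to proceed by induction on $n$, the length of the path. The base case $n = 1$ will follow immediately from Lemma~\ref{lem:oneStepColumnAdd}, since the loop at $u_0 = u$ together with the direct edge to $u_1 = v$ supplies two distinct outgoing edges at $u$.

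For the inductive step with $n \geq 2$, I would split into two cases according to the outgoing valency of $u_{n-1}$. In the first case, where $u_{n-1}$ emits at least two edges (this also covers infinite emitters), I would realize the matrix identity
\[
(I + E_{u_0,u_{n-1}})(I + E_{u_{n-1},u_n})(I - E_{u_0,u_{n-1}})(I - E_{u_{n-1},u_n}) = I + E_{u_0,u_n},
\]
which is verified by direct multiplication using the distinctness of $u_0, u_{n-1}, u_n$ to eliminate cross-terms. The four column operations are: first invoke the inductive hypothesis on the sub-path $u_0, u_1, \ldots, u_{n-1}$ of length $n-1$ (the loop at $u_0$ is preserved) to add column $u_0$ into column $u_{n-1}$; then apply Lemma~\ref{lem:oneStepColumnAdd} at the edge $u_{n-1} \to u_n$, which remains valid since the preceding column operation can only increase the row sum at $u_{n-1}$; finally reverse these two additions in succession, relying on the symmetry of move equivalence.

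In the second case, $u_{n-1}$ emits exactly one edge, necessarily $u_{n-1} \to u_n$, so $u_{n-1}$ is regular and does not support a loop. I would then apply Move~\CO to collapse $u_{n-1}$, obtaining a graph $E_c$ in which the induced path $u_0, u_1, \ldots, u_{n-2}, u_n$ has length $n-1$ and $u_0$ still supports a loop. By the inductive hypothesis, adding column $u_0$ into column $u_n$ in $E_c$ yields some $E_c^{(1)}$. A direct matrix calculation---tractable precisely because $u_{n-1}$'s unique outgoing edge goes to $u_n$ and not to $u_0$, so that the collapse and the column operation commute---shows that $E_c^{(1)}$ is exactly what one obtains by collapsing $u_{n-1}$ in the target graph $E'$ with $\Bsf_{E'} = B'$; hence $E \Meq E_c \Meq E_c^{(1)} \Meq E'$ by Move~\CO together with the inductive hypothesis.

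The main obstacle will be to verify the preservation of conditions in the first case (that the row sum at $u_{n-1}$ remains positive after the first column operation, and that the reverse moves can be legitimately applied to the modified intermediate graphs) and to verify the commutativity claim in the second case at the matrix level. Both tasks reduce to careful bookkeeping of how the moves act on $\Bsf$, but require attention since the interaction of column operations at $u_{n-1}$ and $u_n$ is what forces the four-move structure rather than a simple iteration of Lemma~\ref{lem:oneStepColumnAdd}.
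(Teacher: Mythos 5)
Your proposal is correct, and the verifications you flag do go through: the elementary-matrix identity $(I+e_{u_0u_{n-1}})(I+e_{u_{n-1}u_n})(I-e_{u_0u_{n-1}})(I-e_{u_{n-1}u_n})=I+e_{u_0u_n}$ holds for three distinct indices; in your first case the $(u_{n-1},u_n)$ entry is untouched by the first addition and the row sum at $u_{n-1}$ only grows, so Lemma~\ref{lem:oneStepColumnAdd} applies forwards and (on $B'$ and on the intermediate matrix) backwards, the undoing additions being legal instances of the lemma and of the inductive hypothesis; and in your second case $\Bsf_E(u_{n-1},u_0)=0$ guarantees that $u_{n-1}$ is still a regular non-loop vertex emitting one edge in the target graph, which is exactly what makes Move \CO commute with the column addition at the matrix level. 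However, your route is organized differently from the paper's. The paper does not induct on $n$: it first collapses, in both $E$ and $E'=\Esf_{B'+I}$, every intermediate vertex $u_i$ ($1\le i\le n-1$) that emits a single edge, reducing to the case where all of $u_0,\dots,u_{n-1}$ emit at least two edges, and then applies Lemma~\ref{lem:oneStepColumnAdd} repeatedly to \emph{both} $\Bsf_E$ and $B'$ (adding columns $u_{n-1},u_{n-2},\dots$ into column $u_n$) until the two sides reach one and the same matrix, concluding by symmetry and transitivity of \Meq. So the paper's argument is a confluence-to-a-common-matrix argument using only forward column additions after one global reduction, whereas yours trades that for an induction with explicit subtraction (backwards) moves encoded in the commutator identity, plus a local treatment of the degenerate vertex $u_{n-1}$. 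The paper's version keeps the legality checks minimal (only forward applications of the one-step lemma, at vertices already known to emit two edges); yours keeps the intermediate matrices closer to $\Bsf_E$ and makes the underlying $\operatorname{GL}$-identity explicit, at the price of carrying the ``the undoing addition is legal'' bookkeeping through the induction. Both are valid proofs of the proposition.
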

\begin{proof}
That $u$ supports a loop guarantees that $B'+I$ is the adjacency matrix of a graph $E'=\Esf_{B'+I}$. 

The vertex $u_i$ emits exactly one edge in $E$ if and only if it emits exactly one edge in $E'$, for $i=1,\ldots,n-1$. So by collapsing all regular vertices $u_i$, $i=1,2,\ldots,n-1$ emitting exactly one edge both in $E$ and in $E'$, we get two new graphs $E_1\Meq E$ and $E_1'\Meq E'$. 
In $E_1$, there is a path from $u$ to $v$ through vertices that all emit at least two edges. 
Moreover, $\Bsf_{E_1'}$ is obtained from $\Bsf_{E_1}$ by adding the $u$'th column into the $v$'th column. 
Therefore, we may without loss of generality assume that all the vertices $u_i$, $i=0,1,2,\ldots,n-1$ emit at least two edges. 

By repeated applications of Lemma~\ref{lem:oneStepColumnAdd}, we first add the $u_{n-1}$'th column into the $u_n$'th column of $\Bsf_E$, which we can since there is an edge from $u_{n-1}$ to $u_n$.
Then we add the $u_{n-2}$'th column into the $u_n$'th column, which we can since there now is an edge from $u_{n-2}$ to $u_n$. 
Continuing this way, we end up with a matrix $C$ which is formed from $\Bsf_E$ by adding all the columns $u_i$, for $i = 0,1,2,\ldots, n-1$, into the the $u_n$'th column. 
We have that $\Asf_E \Meq C + I$.

Now consider the matrix $B'=\Bsf_{E'}$. 
By repeated applications of Lemma~\ref{lem:oneStepColumnAdd}, we first add the $u_{n-1}$'th column into the $u_n$'th column of $B'=\Bsf_{E'}$, which we can since there is an edge from $u_{n-1}$ to $u_n$.
Then we add the $u_{n-2}$'th column into the $u_n$'th column, which we can since there now is an edge from $u_{n-2}$ to $u_n$. 
Continuing this way, we end up with a matrix $D$ which is formed from $B'=\Bsf_{E'}$ by adding all the columns $u_i$, for $i = 1,2,\ldots, n-1$, into the the $u_n$'th column. 
We have that $B'+I=\Asf_{E'} \Meq D + I$.

But it is clear from the construction that $C=D$. 
\end{proof}

\begin{remark} \label{rmk:columnAdd}
Similar to how we used Lemma \ref{lem:oneStepColumnAdd} in the above proof, we can use Proposition \ref{prop:columnAdd} ``backwards'' to subtract columns in $\Bsf_E$ as long as the addition that undoes the subtraction would be legal. 
\end{remark}

We now turn to row additions. 

\begin{lemma} \label{lem:oneStepRowAdd}
Let $E = (E^0, E^1, r_E,s_E)$ be a graph. 
Let $u,v \in E^0$ be distinct vertices. 
Suppose the $(v,u)$'th entry of $\Bsf_E$ is nonzero (\ie, there is an edge from $v$ to $u$), and that $u$ is a regular vertex. 
If $B'$ is the matrix formed from $\Bsf_E$ by adding the $u$'th row into the $v$'th row, then 
\[
	\Asf_E \Meq B' + I. 
\]
\end{lemma}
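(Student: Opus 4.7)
The plan is to mirror the proof of Lemma~\ref{lem:oneStepColumnAdd}, replacing out-splitting by in-splitting. I fix an edge $f$ from $v$ to $u$ in $E$ and partition the in-edges of $u$ as $\mathcal{E}_1 = \{f\}$ and $\mathcal{E}_2 = r_E^{-1}(u) \setminus \{f\}$.

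In the main case $\mathcal{E}_2 \neq \emptyset$, I apply Move \II at $u$ with this partition to produce a graph $G_1$ in which $u$ is replaced by two vertices $u_1, u_2$: the class $\mathcal{E}_i$ becomes the set of in-edges of $u_i$, and every out-edge of $u$ is copied once at $u_1$ and once at $u_2$ (a self-loop at $u$ lies in $\mathcal{E}_2$ and therefore becomes an edge $u_1 \to u_2$ together with a loop at $u_2$). The vertex $u_1$ is regular because $u$ is, and it supports no loop since its only in-edge $f$ has source $v\neq u$. Hence Move \CO applies to collapse $u_1$; after relabeling $u_2$ as $u$ I obtain a graph $G$. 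A direct edge count shows $G$ differs from $E$ only in row $v$: the edge $f$ is lost, and for each out-edge $u\to y$ of the original $u$ one new edge $v\to y$ is gained (for a self-loop at $u$, the composition through the collapsed $u_1$ yields a new edge $v\to u$). This matches exactly the definition of $B'+I$ as a perturbation of $\Asf_E$, giving $\Asf_E \Meq B'+I$.

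The remaining case is $\mathcal{E}_2 = \emptyset$, so $f$ is the unique in-edge of $u$; in particular $u$ has no self-loop, and since $u$ is regular I can apply Move \CO directly to $u$ in $E$. On the other hand, a short matrix computation shows $\Asf_{B'+I}(w,u) = 0$ for every $w$, so $u$ is a regular source in $\Esf_{B'+I}$ that can be removed via Move \SSS. The two resulting graphs on $E^0 \setminus \{u\}$ coincide, yielding $\Asf_E \Meq B'+I$ in this case as well.

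The main subtlety is the bookkeeping through the composite Move \II $+$ Move \CO in the main case: the compositions produced by collapsing $u_1$ must exactly reproduce the contribution of row $u$ of $\Bsf_E$ to row $v$, including the correct accounting of self-loops at $u$ and the $-1$ at the $(v,u)$ entry due to the lost edge $f$. The separate argument for $\mathcal{E}_2 = \emptyset$ is needed because Move \II requires both partition classes to be nonempty.
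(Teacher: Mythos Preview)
Your proof is correct and follows essentially the same approach as the paper: the same case split on whether $f$ is the unique in-edge of $u$, the same use of Move \II\ at $u$ with the partition $\{f\}$, $r_E^{-1}(u)\setminus\{f\}$ followed by Move \CO\ at the resulting $u_1$ in the main case, and the same use of Move \CO\ on $E$ versus Move \SSS\ on $\Esf_{B'+I}$ in the degenerate case. The only differences are cosmetic (order of the two cases, and you spell out the self-loop bookkeeping a bit more explicitly).
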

\begin{proof}
Let $E'=\Esf_{B'+I}$ denote the graph with adjacency matrix $B'+I$. 

First assume that $u$ only receives one edge in $E$ (which necessarily is the edge from $v$). 
Then $u$ is a regular vertex not supporting a loop, so we can collapse it obtaining a graph $E''$. 
Note that the vertex $u$ is a regular source in $E'$, so we may remove it. It is clear that the resulting graph is exactly $E''$.  

Now assume instead that $u$ receives at least two edges. 
Fix an edge $f$ from $v$ to $u$.
Form a graph $G$ from $E$ by removing $f$ but adding for each edge $e \in s_E^{-1}(u)$ an edge $\bar{e}$ with $s_G(\bar{e}) = v$ and $r_G(\bar{e}) = r_E(e)$. 
We claim that $E \Meq G$. 
Arguing as in the proof of Lemma \ref{lem:oneStepColumnAdd} we see that this is equivalent to proving $\Asf_E \Meq B' + I$.

Partition $r_E^{-1}(u)$ as $\mathcal{E}_1 = \{ f \}$ and $\mathcal{E}_2 = r_E^{-1}(u) \setminus \{ f \}$. 
By our assumptions on $u$, $\mathcal{E}_2$ is nonempty, and $u$ is regular, so we can use Move \II.  
Doing so replaces $u$ with two new vertices, $u_1$ and $u_2$. 
The vertex $u_1$ only receives one edge, and that edge comes from $v$, the vertex $u_2$ receives the edges $u$ received except $f$.
Since $u_1$ is regular and not the base of a loop we can collapse it.
The resulting graph is $G$ (after we relabel $u_2$ as $u$), so $G \Meq E$.
\end{proof}

We can also add rows along a path of vertices.

\begin{proposition} \label{prop:rowAdd}
Let $E = (E^0, E^1, r_E,s_E)$ be a graph and let $u,v \in E^0$ be distinct vertices with a path from $v$ to $u$ going through distinct vertices $v = v_0, v_1, v_2, \ldots, v_n = u$ (labelled so there is an edge from $v_i$ to $v_{i+1}$ for $i=0,1,2,\ldots,n-1$). 
Suppose further that the vertex $u$ is regular and  supports at least one loop.
If $B'$ is the matrix formed from $\Bsf_E$ by adding the $u$'th row into the $v$'th row, then 
\[
	\Asf_E \Meq B' + I. 
\]
\end{proposition}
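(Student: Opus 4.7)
The plan is to mimic the proof of Proposition~\ref{prop:columnAdd} step for step, replacing column additions by row additions throughout. The new twist is that Lemma~\ref{lem:oneStepRowAdd} requires the vertex whose row is being added to be regular, whereas Lemma~\ref{lem:oneStepColumnAdd} only requires the column's vertex to emit at least two edges; this forces an extra outsplit-based reduction that has no counterpart in the column case.

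First, since $u$ supports a loop, the entry $(B'+I)(v,u)=\Asf_E(v,u)+\Asf_E(u,u)-1$ is nonnegative, so $B'+I$ is the adjacency matrix of a graph $E':=\Esf_{B'+I}$. Next, I reduce to the case where every intermediate vertex $v_1,\dots,v_{n-1}$ is regular. If some $v_k$ is an infinite emitter, I apply Move~\OO at $v_k$ with partition $\mathcal{E}_1=\{e_k\}$ and $\mathcal{E}_2=s^{-1}(v_k)\setminus\{e_k\}$, where $e_k$ is the chosen path edge $v_k\to v_{k+1}$; the resulting vertex $v_k^1$ is regular and inherits the place of $v_k$ on the path. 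Because row $v_k$ of $B'$ coincides with row $v_k$ of $\Bsf_E$ (as $v_k\neq v$), the same outsplit is available in $E'$, and a short bookkeeping check shows that the two outsplit graphs $\tilde E$ and $\tilde E'$ still satisfy $\Bsf_{\tilde E'}=\Bsf_{\tilde E}+(\text{row }u\text{ added into row }v)$. Iterating this over all infinite-emitter intermediates brings us to the regular-intermediate case.

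Under this assumption, mirroring the column proof, I apply Lemma~\ref{lem:oneStepRowAdd} iteratively to $\Bsf_E$: for $k=1,2,\dots,n$, add row $v_k$ into row $v_0=v$. Regularity of $v_k$ is given, and after $k-1$ steps row $v_0$ equals $\sum_{j=0}^{k-1}\Bsf_E(v_j,\cdot)$, whose $v_k$-coordinate contains $\Bsf_E(v_{k-1},v_k)\geq 1$ --- so the required edge $v_0\to v_k$ is present in the current matrix. Let $C$ denote the final matrix, so $C(v_0,\cdot)=\sum_{j=0}^n\Bsf_E(v_j,\cdot)$ and all other rows are unchanged; then $\Asf_E\Meq C+I$. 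Running the identical iteration on $B'$, but stopping at $k=n-1$ (since $B'$ already contains $\Bsf_E(v_n,\cdot)$ in row $v_0$), yields a matrix $D$ with $D(v_0,\cdot)=B'(v_0,\cdot)+\sum_{j=1}^{n-1}\Bsf_E(v_j,\cdot)=\sum_{j=0}^n\Bsf_E(v_j,\cdot)$ and all other rows unchanged. Hence $D=C$, and combining gives $\Asf_E\Meq C+I=D+I\Meq B'+I$, as desired.

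The principal obstacle is precisely the handling of intermediate infinite emitters, which violate the regularity hypothesis of Lemma~\ref{lem:oneStepRowAdd}; the rest of the argument is a direct dualization of the column case, with the check that outsplitting at an intermediate vertex commutes with the row-addition operation being routine but notationally delicate.
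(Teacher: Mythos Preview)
Your proof is correct and follows essentially the same approach as the paper's: first reduce to the case where all intermediate vertices $v_1,\dots,v_{n-1}$ are regular by outsplitting each singular intermediate along the partition $\{e_k\}\sqcup(s^{-1}(v_k)\setminus\{e_k\})$, verify that this outsplit commutes with the row-addition operation, and then run the iterated application of Lemma~\ref{lem:oneStepRowAdd} on both $\Bsf_E$ and $B'$ to arrive at a common matrix $C=D$. The paper organizes the two halves in the opposite order (regular case first, then the outsplit reduction), but the content is the same.
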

\begin{proof}
That $u$ supports a loop guarantees that $B'+I$ is the adjacency matrix of a graph $E'=\Esf_{B'+I}$. 

First we prove the special case where all the vertices $v_1,\ldots,v_n$ are regular. 
By repeated applications of Lemma~\ref{lem:oneStepRowAdd}, we first add the $v_{1}$'st row into the $v_0$'th row of $\Bsf_E$, which we can since there is an edge from $v_{0}$ to $v_1$ and $v_1$ is regular.
Then we add the $v_{2}$'nd row into the $v_0$'th row, which we can since there now is an edge from $v_{0}$ to $v_2$ and $v_2$ is regular. 
Continuing this way, we end up with a matrix $C$ which is formed from $\Bsf_E$ by adding all the rows $v_i$, for $i = 1,2,\ldots, n$, into the the $v_0$'th column. 
We have that $\Asf_E \Meq C + I$.

Now consider the matrix $B'=\Bsf_{E'}$. 
By repeated applications of Lemma~\ref{lem:oneStepRowAdd}, we first add the $v_{1}$'st row into the $v_0$'th row of $B'=\Bsf_{E'}$, which we can since there is an edge from $v_{0}$ to $v_1$.
Then we add the $v_{2}$'nd row into the $v_0$'th row, which we can since there now is an edge from $v_{0}$ to $v_2$. 
Continuing this way, we end up with a matrix $D$ which is formed from $B'=\Bsf_{E'}$ by adding all the rows $v_i$, for $i = 1,2,\ldots, n-1$, into the the $v_0$'th row. 
We have that $B'+I=\Asf_{E'} \Meq D + I$.
But it is clear from the construction that $C=D$.

Now we prove that the general case when only $u$ is assumed to be regular can be reduced to the case where $v_1,\ldots,v_n$ are regular. Choose a path $e_0e_1\cdots e_{n-1}$ going through the distinct vertices $v_1,\ldots,v_n$. For each singular vertex $v_{i}$, $i=1,\ldots,n-1$, we outsplit according to the partition $\mathcal{E}_i^1=\{e_i\}$ and $\mathcal{E}_i^2=s_E^{-1}(v_i)$ and call the corresponding vertices $v_i^1$ and $v_i^2$, respectively. Denote the split graph by $E_1$, and denote the vertices $v_i$, $i=1,\ldots,n-1$ that were not split by $v_i^1$. Note that we now have a path from $v$ to $u$ through distinct regular vertices. Note also that since all vertices along the path are distinct, what happens to the $v_i$'th entry of row $u$ and $v$ is that it gets doubled for each vertex $u_i$ that gets split and stays unchanged for the vertices $u_i=u_i^1\in E^0$ that are regular. Let $E'$ be the graph $\Esf_{B'+I}$, and let $E_1'$ be the graph constructed using exactly the same outsplittings as in the graph above. Now it is clear that the graph we get from $E_1$ by adding row $u$ into row $v$ is exactly $E_1'$. Thus the general case now follows from the above. 
\end{proof}

\begin{remark} \label{rmk:rowAdd}
We can also use Proposition \ref{prop:rowAdd} ``backwards'' to subtract rows in $\Bsf_E$ (\cf\ Remark \ref{rmk:columnAdd}). 
\end{remark}

\section{Cuntz splice implies stable isomorphism}
\label{sec:cuntzsplice}

In this section, we show that the Cuntz splice gives stably isomorphic graph $C^*$-algebras.  
We first set up some notation.  

\begin{notation}\label{notation:OnceAndTwice-new}
Let $\mathbf{E}_*$ and $\mathbf{E}_{**}$ denote the graphs: 
\begin{align*}
\mathbf{E}_* \  = \ \ \ \ \xymatrix{
  \bullet^{v_1} \ar@(ul,ur)[]^{e_{1}} \ar@/^/[r]^{e_{2}} & \bullet^{v_2} \ar@(ul,ur)[]^{e_{4}} \ar@/^/[l]^{e_{3}}
}
\end{align*}
\begin{align*}
\mathbf{E}_{**} \  =  \ \ \ \ \xymatrix{
	\bullet^{ w_{4} } \ar@(ul,ur)[]^{f_{10}}  \ar@/^/[r]^{ f_{9} } & \bullet^{ w_{3} } \ar@(ul,ur)[]^{f_{7}} \ar@/^/[r]^{ f_{6} }  \ar@/^/[l]^{f_{8}} &  \bullet^{w_1} 				\ar@(ul,ur)[]^{f_{1}} \ar@/^/[r]^{f_{2}} \ar@/^/[l]^{f_{5}}
	& \bullet^{w_2} \ar@(ul,ur)[]^{f_{4}} \ar@/^/[l]^{f_{3}}
	}
\end{align*}
The graph $\mathbf{E}_*$ is what we attach when we Cuntz splice.  If we instead attach the graph $\mathbf{E}_{**}$, we have Cuntz spliced twice.

Let $E = ( E^{0}, E^{1} , r_{E}, s_{E} )$ be a graph and let $u$ be a vertex of $E$.
Then $E_{u, -}$ can be described as follows (up to canonical isomorphism):
\begin{align*}
E_{u,-}^{0} &= E^{0} \sqcup \mathbf{E}_{*}^{0} \\
E_{u,-}^{1} &= E^{1} \sqcup \mathbf{E}_{*}^{1} \sqcup \{ d_1, d_2 \}
\end{align*}
with $r_{E_{u,-}} \vert_{E^{1}} = r_{E}$, $s_{E_{u,-}} \vert_{ E^{1} } = s_{E}$, $r_{E_{u,-}} \vert_{\mathbf{E}_{*}^{1}} = r_{\mathbf{E}_{*}}$, $s_{E_{u,-}} \vert_{\mathbf{E}_{*}^{1}} = s_{\mathbf{E}_{*}}$, and
\begin{align*}
	s_{E_{u,-}}(d_1) &= u	& r_{E_{u,-}}(d_1) &= v_{1} \\
	s_{E_{u,-}}(d_2) &= v_1	& r_{E_{u,-}}(d_2) &= u.
\end{align*}
Moreover, $E_{u,--}$ can be described as follows (up to canonical isomorphism):
\begin{align*}
E_{u,--}^{0} &= E^{0} \sqcup \mathbf{E}_{**}^{0} \\
E_{u,--}^{1} &= E^{1} \sqcup \mathbf{E}_{**}^{1} \sqcup \{ d_1, d_2 \}
\end{align*}
with $r_{E_{u,--}} \vert_{E^{1}} = r_{E}$, $s_{E_{u,--}} \vert_{ E^{1} } = s_{E}$, $r_{E_{u,--}} \vert_{\mathbf{E}_{**}^{1}} = r_{\mathbf{E}_{**}}$, $s_{E_{u,--}} \vert_{\mathbf{E}_{**}^{1}} = s_{\mathbf{E}_{**}}$, and
\begin{align*}
	s_{E_{u,--}}(d_1) &= u		& r_{E_{u,--}}(d_1) &= w_{1} \\
	s_{E_{u,--}}(d_2) &= w_1	& r_{E_{u,--}}(d_2) &= u.
\end{align*}
\end{notation}

\begin{example}\label{example:cuntz-splice-new}
Consider the graph 
\begin{align*}
E \  = \ \ \ \ \xymatrix{
  \bullet_{u} \ar@(dl,ul) \ar@(ur,dr)
}
\end{align*}
Then 
\begin{align*}
E_{u,-} \  = \ \ \ \ \xymatrix{
  \bullet^{v_1} \ar@/^/[d]^{d_2} \ar@(ul,ur)[]^{e_{1}} \ar@/^/[r]^{e_{2}} & \bullet^{v_2} \ar@(ul,ur)[]^{e_{4}} \ar@/^/[l]_{e_{3}} \\
  \bullet_{u} \ar@/^/[u]^{d_1} \ar@(dl,ul) \ar@(ur,dr) & 
}
\end{align*}
and 
\begin{align*}
E_{u,--} \  = \ \ \ \ \xymatrix{
	\bullet^{ w_{4} } \ar@(ul,ur)[]^{f_{10}}  \ar@/^/[r]^{ f_{9} } &
	\bullet^{ w_{3} } \ar@(ul,ur)[]^{f_{7}} \ar@/^/[r]^{ f_{6} }  \ar@/^/[l]_{f_{8}} &
	\bullet^{w_1} \ar@/^/[d]^{d_2} \ar@(ul,ur)[]^{f_{1}} \ar@/^/[r]^{f_{2}} \ar@/^/[l]_{f_{5}} &
	\bullet^{w_2} \ar@(ul,ur)[]^{f_{4}} \ar@/^/[l]_{f_{3}} \\
	& & \bullet_{u} \ar@/^/[u]^{d_1} \ar@(dl,ul) \ar@(ur,dr) & 
}
\end{align*}
\end{example}

The strategy for obtaining the result is as follows.   
By \cite{MR1340839}, the graph \cas $C^*(\mathbf{E}_*)$ and $C^*(\mathbf{E}_{**})$ are isomorphic.  We first show in Proposition \ref{prop:csdouble-new} that $C^*(\mathbf{E}_*)$ and $C^*(\mathbf{E}_{**})$ are still isomorphic if we do not enforce the summation relation at $v_1$ and $w_1$ respectively, by appealing to general classification results. In fact, we need to establish (Lemma \ref{lem:csmurray-new}) that they are isomorphic in a way sending prescribed 
elements of the nonstable $K$-theory to other prescribed 
elements. Using this, we prove in Theorem \ref{t:cuntz-splice-1} by use of Kirchberg's  Embedding Theorem that 
Cuntz splicing once and twice yields isomorphic graph \cas. Finally, we  establish in Proposition \ref{prop:cuntzsplicetwice} that the graph obtained by Cuntz splicing twice is move equivalent to the original, and the desired conclusion follows.

\begin{proposition} \label{prop:csdouble-new}
The relative graph \cas (in the sense of Muhly-Tomforde \cite{MR2054981}) $C^*(\mathbf{E}_{*}, \{v_2\})$ and $C^*(\mathbf{E}_{**}, \{ w_2,w_3,w_4 \})$ are isomorphic. 
\end{proposition}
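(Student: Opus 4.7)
The plan is to invoke a classification theorem after establishing that both algebras are unital, separable, nuclear $C^*$-algebras in the UCT class with matching ideal structure and filtered $K$-theory. First I would identify, in each relative algebra, the gap projection
\[
q_1 := p_{v_1} - s_{e_1}s_{e_1}^* - s_{e_2}s_{e_2}^* \quad\text{in } C^*(\mathbf{E}_*,\{v_2\})
\]
and
\[
q_2 := p_{w_1} - s_{f_1}s_{f_1}^* - s_{f_2}s_{f_2}^* - s_{f_5}s_{f_5}^* \quad\text{in } C^*(\mathbf{E}_{**},\{w_2,w_3,w_4\}),
\]
and use the tail-adding construction of Muhly-Tomforde \cite{MR2054981} to realize each relative algebra as a full unital corner of an ordinary graph \ca of a tailed graph. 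Enumerating the hereditary saturated subsets of vertices in each tailed graph then shows that $q_j$ generates the unique non-trivial closed two-sided ideal $I_j$, and that the quotient is the ordinary graph algebra $C^*(\mathbf{E}_*)$ (respectively $C^*(\mathbf{E}_{**})$).

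Next I would compute the six-term exact sequence. By \cite{MR1340839}, the quotients $C^*(\mathbf{E}_*)$ and $C^*(\mathbf{E}_{**})$ are unital, simple, purely infinite with vanishing $K$-theory, hence both isomorphic to $\mathcal{O}_2$. From this one deduces $K_0(I_j) \cong \Z$, $K_1(I_j) = 0$, and $K_0$ of the whole algebra is $\Z$ with the unit mapping to the generator, while $K_1$ is trivial. A direct inspection of the tail in the Muhly-Tomforde graph identifies each $I_j$ as an AF algebra with scaled ordered $K_0$-group $\Z$, matching in both cases, and one checks that the extension class in the appropriate Ext group matches.

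Finally I would apply a classification theorem for extensions of a Kirchberg algebra by an AF algebra. Since $K_*(A/I_j) = 0$, the group $\operatorname{Ext}^1(A/I_j, I_j)$ vanishes, and the extension carries no additional information beyond its $K$-theoretic data. Matching of the filtered $K$-theory together with the unit classes then yields the isomorphism by an appropriate classification theorem, such as the Kirchberg lifting result \cite{MR1796912} combined with Elliott's classification of AF algebras, invoked through the pullback description of $A$ via $I_j$ and $A/I_j$.

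The main obstacle is the passage from matching filtered $K$-theory to an honest isomorphism in the unital (not merely stable) category; one must keep careful track of the unit class and of the scale of the AF ideal, and verify that the hypotheses of the chosen classification theorem really apply in this non-simple setting. An additional subtlety is making sure the gap ideals are correctly identified with concrete AF algebras, since the tail in the Muhly-Tomforde construction interacts nontrivially with the rest of the graph through the vertex $v_1$ (respectively $w_1$).
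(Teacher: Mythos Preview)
Your overall strategy matches the paper's: realize each relative algebra as an ordinary graph \ca via Muhly--Tomforde, identify the unique nontrivial ideal and the quotient, compute the six-term sequence (both give $\Z\to\Z\to 0\to 0\to 0\to 0$ with matching unit class), and then invoke a classification result for such extensions. So the architecture is correct.

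There are, however, two points where your execution diverges from what actually works. First, the Muhly--Tomforde result you need (\cite[Theorem~3.7]{MR2054981}) gives a direct \stariso with the graph \ca of a graph obtained by adding a single sink $v_1'$ (respectively $w_1'$), not a full corner in a tailed graph; in particular the gap ideal is precisely $\K$, not merely some AF algebra whose scale must be tracked. This simplification removes most of the ``additional subtlety'' you flag at the end. Second, and more seriously, your proposed endgame via \cite{MR1796912} does not apply: Kirchberg's lifting theorem requires the algebras to be strongly purely infinite, and these are not---they contain $\K$ as an ideal. The paper instead invokes the Brown--Dadarlat classification of extensions \cite[Theorem~2]{MR1396721} (see also \cite[Corollary~4.20]{arXiv:1301.7695v1}), which is tailored exactly to this situation: an extension of a classifiable quotient (here $\mathcal O_2$) by a classifiable ideal (here $\K$), classified by the six-term sequence together with the unit class. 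Once you replace your appeal to Kirchberg with the correct extension-classification result, your argument becomes the paper's argument.
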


\begin{proof}
Following \cite[Definition~3.6]{MR2054981} we define a graph 
\begin{align*}
(\mathbf{E}_*)_{\{v_2\}} \ = \ \ \ \ \xymatrix{
  \bullet^{v_1} \ar[d]_{e_{1}'} \ar@(ul,ur)[]^{e_{1}} \ar@/^/[r]^{e_{2}} & \bullet^{v_2} \ar@(ul,ur)[]^{e_{4}} \ar@/^/[l]_{e_{3}} \ar@/^/[dl]^{e_{4}'} \\
  \bullet_{v_1'} & 
}
\end{align*} 
Then by \cite[Theorem~3.7]{MR2054981} we have that $C^*(\mathbf{E}_{*}, \{v_2\}) \cong C^*((\mathbf{E}_*)_{\{v_2\}})$.
Similarly we define a graph 
\begin{align*}
(\mathbf{E}_{**})_{\{w_2,w_3,w_4\}} \ = \ \ \ \ \xymatrix{
	\bullet^{w_4} \ar@(ul,ur)[]^{f_{10}}  \ar@/^/[r]^{ f_{9} } &
	\bullet^{w_3} \ar@/_/[dr]_{f_6'} \ar@(ul,ur)[]^{f_{7}} \ar@/^/[r]^{ f_{6} }  \ar@/^/[l]_{f_{8}} &
	\bullet^{w_1} \ar[d]_{f_1'} \ar@(ul,ur)[]^{f_{1}} \ar@/^/[r]^{f_{2}} \ar@/^/[l]_{f_{5}} &
	\bullet^{w_2} \ar@/^/[dl]^{f_3'} \ar@(ul,ur)[]^{f_{4}} \ar@/^/[l]_{f_{3}} \\
	& & \bullet_{w_1'} & 
}
\end{align*}
Using \cite[Theorem~3.7]{MR2054981} again, we have that $C^*(\mathbf{E}_{**}, \{ w_2,w_3,w_4 \})$ is isomorphic to $C^*((\mathbf{E}_{**})_{\{w_2,w_3,w_4\}})$.

Both the graphs $(\mathbf{E}_*)_{\{v_2\}}$ and $(\mathbf{E}_{**})_{\{w_2,w_3,w_4\}}$ satisfy Condition~(K). 
Using the well developed theory of ideal structure and $K$-theory for graph \cas, we see that both have exactly one nontrivial ideal, that this ideal is the compact operators, and that their six-term exact sequences are 
\begin{align*}
\xymatrix{
	\Z \langle v_1' \rangle \ar[r] & \Z \ar[r] & 0 \ar[d] \\
	0 \ar[u] & \ar[l] 0 & \ar[l] 0
}
\ \ \ \ &  \ \ \ \
\xymatrix{
	\Z \langle w_1' \rangle \ar[r] & \Z \ar[r] & 0 \ar[d] \\
	0 \ar[u] & \ar[l] 0 & \ar[l] 0
}
\end{align*}

Furthermore, in $K_0(C^*((\mathbf{E}_*)_{\{v_2\}}))$ we have 
\begin{align*}
	[p_{v_1}] &= -[p_{v_1'}] = [p_{v_2}],
\end{align*}
and in $K_0(C^*((\mathbf{E}_{**})_{\{w_2,w_3,w_4\}}))$ we have 
\begin{align*}
	[p_{w_1}] &= -[p_{w_1'}] = [p_{w_2}], \\
	[p_{w_3}] &= 0 = [p_{w_4}].
\end{align*}
Therefore the class of the unit is $-[p_{v_1'}]$ and $-[p_{w_1'}]$, respectively. 
It now follows from \cite[Theorem~2]{MR1396721} (see also \cite[Corollary~4.20]{arXiv:1301.7695v1}) that $C^*((\mathbf{E}_*)_{\{v_2\}}) \cong C^*((\mathbf{E}_{**})_{\{w_2,w_3,w_4\}})$ and hence that $C^*(\mathbf{E}_{*}, \{v_2\}) \cong C^*(\mathbf{E}_{**}, \{ w_2,w_3,w_4 \})$.
\end{proof}

We also need a technical result about the projections in $\mathcal{E} = C^*(\mathbf{E}_{*}, \{v_2\})$.

\begin{lemma} \label{lem:csmurray-new}
Let $\mathcal{E} = C^*(\mathbf{E}_{*}, \{v_2\})$ and choose an isomorphism between $\mathcal{E}$ and $C^*(\mathbf{E}_{**}, \{ w_2,w_3,w_4 \})$ according to the previous proposition. 
Let $p_{v_1}$, $p_{v_2}$, $s_{e_1}$, $s_{e_2}$, $s_{e_3}$, $s_{e_4}$ be the canonical generators of $C^*(\mathbf{E}_{*}, \{v_2\})=\mathcal{E}$ and let $p_{w_1}$, $p_{w_2}$, $p_{w_3}$, $p_{w_4}$, $s_{f_1}$, $s_{f_2}, \ldots, s_{f_{10}}$ denote the image of the canonical generators of $C^*(\mathbf{E}_{**}, \{ w_2,w_3,w_4 \})$ in $\mathcal{E}$ under the chosen isomorphism. 
Then 
\begin{align*}
	s_{e_1} s_{e_1}^* + s_{e_2} s_{e_2}^* &\sim s_{f_1} s_{f_1}^* + s_{f_2} s_{f_2}^* +s_{f_5} s_{f_5}^*, \\
	p_{v_1} - \left( s_{e_1} s_{e_1}^* + s_{e_2} s_{e_2}^* \right) &\sim p_{w_1} - \left( s_{f_1} s_{f_1}^* + s_{f_2} s_{f_2}^* +s_{f_5} s_{f_5}^* \right), \\
	1_\mathcal{E}-p_{v_1} = p_{v_2} &\sim p_{w_2}+p_{w_3}+p_{w_4}=1_\mathcal{E}-p_{w_1}
\end{align*}
in $\mathcal{E}$, where $\sim$ denotes Murray-von~Neumann equivalence. Thus there exists a unitary $z_0$ in $\mathcal{E}$ such that
\begin{align*}
	z_0\left(s_{e_1} s_{e_1}^* + s_{e_2} s_{e_2}^*\right)z_0^* &= s_{f_1} s_{f_1}^* + s_{f_2} s_{f_2}^* +s_{f_5} s_{f_5}^*, \\
	z_0\left(p_{v_1} - \left( s_{e_1} s_{e_1}^* + s_{e_2} s_{e_2}^* \right)\right)z_0^* &= p_{w_1} - \left( s_{f_1} s_{f_1}^* + s_{f_2} s_{f_2}^* +s_{f_5} s_{f_5}^* \right), \\
	z_0p_{v_1}z_0^* &= p_{w_1} \\	
	 z_0p_{v_2}z_0^* &= p_{w_2}+p_{w_3}+p_{w_4}.
\end{align*}
\end{lemma}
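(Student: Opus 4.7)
The plan is to verify the three Murray--von Neumann equivalences separately, using the $K_0$-data from the proof of Proposition~\ref{prop:csdouble-new} together with the ideal structure of $\mathcal{E}$, and then to assemble the unitary $z_0$ as a sum of implementing partial isometries.

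First, I would upgrade the $K$-theory from the previous proof. Since the chosen isomorphism preserves the class of the unit, and both $K_0$-groups are canonically identified with $\Z$ via the positive generators $[p_{v_1'}]$ and $[p_{w_1'}]$, the equations $[1_\mathcal{E}] = -[p_{v_1'}] = -[p_{w_1'}]$ force the induced $K_0$-map to be the identity under these identifications, so in particular $[p_{v_1'}] = [p_{w_1'}]$ in $K_0(\mathcal{E})$. Combining this with $[s_e s_e^*] = [s_e^* s_e] = [p_{r(e)}]$ and the previously computed relations $[p_{v_1}] = [p_{v_2}] = -[p_{v_1'}]$, $[p_{w_1}] = [p_{w_2}] = -[p_{w_1'}]$, $[p_{w_3}] = [p_{w_4}] = 0$, a direct computation yields matching $K_0$-classes $-2$, $1$, and $-1$ for the three pairs of projections appearing in the lemma statement.

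Next, I would exploit the ideal structure. Let $\K \triangleleft \mathcal{E}$ denote the unique proper ideal, shown in Proposition~\ref{prop:csdouble-new} to be the compact operators; then $\mathcal{E}/\K$ is a unital Kirchberg algebra with vanishing $K$-theory, hence isomorphic to $\mathcal{O}_2$ by Kirchberg--Phillips, and every non-zero projection therein is properly infinite. The two middle projections $p_{v_1} - (s_{e_1}s_{e_1}^* + s_{e_2}s_{e_2}^*)$ and $p_{w_1} - (s_{f_1}s_{f_1}^* + s_{f_2}s_{f_2}^* + s_{f_5}s_{f_5}^*)$ vanish in the quotient (the Cuntz--Krieger summation relations at $v_1$ and $w_1$ are forced in $\mathcal{E}/\K$), so they lie in $\K$ with common $K_0$-class $1$, i.e.\ they are rank-one projections in $\K$ and hence Murray--von Neumann equivalent. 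The four remaining projections have non-zero image in $\mathcal{O}_2$ (the vertex projections all survive in the quotient graph), which makes them properly infinite and full in $\mathcal{E}$: fullness follows because any non-zero projection in the simple quotient generates all of $\mathcal{E}/\K$ as an ideal, and $\K$ is the only proper ideal of $\mathcal{E}$. Cuntz's classical result on properly infinite full projections with matching $K_0$-classes then establishes the first and third equivalences.

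Finally, to construct $z_0$: the three projections on the $v$-side are pairwise orthogonal and sum to $p_{v_1}+p_{v_2} = 1_\mathcal{E}$, and likewise the three on the $w$-side sum to $p_{w_1}+p_{w_2}+p_{w_3}+p_{w_4} = 1_\mathcal{E}$. Choosing partial isometries $x_1,x_2,x_3 \in \mathcal{E}$ implementing the three Murray--von Neumann equivalences, the sum $z_0 := x_1 + x_2 + x_3$ is a unitary by orthogonality of both the source and range projections, and a short computation using that $x_i^*x_i \cdot x_j^*x_j = 0$ for $i\neq j$ gives the three explicitly listed conjugation identities; the fourth identity $z_0 p_{v_1} z_0^* = p_{w_1}$ then follows by summing the first two. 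I expect the main technical obstacle to be the careful justification that projections with non-zero image in $\mathcal{O}_2$ are genuinely properly infinite (and not merely modulo $\K$) in $\mathcal{E}$; for this one lifts properly infinite decompositions from the quotient and corrects via functional calculus within the AF ideal $\K$.
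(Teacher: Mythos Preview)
Your argument is correct in outline but follows a different route from the paper, and the route you chose has exactly the technical wrinkle you flag at the end. The paper does not argue proper infiniteness at all: instead it invokes the fact (from Ara--Moreno--Pardo) that graph $C^*$-algebras have \emph{stable weak cancellation}, so that any two projections generating the same ideal and having the same $K_0$-class are Murray--von~Neumann equivalent. With that single tool in hand, the paper translates the three equivalences into the graph-algebra picture $(\mathbf{E}_*)_{\{v_2\}}$ via Muhly--Tomforde, checks that $q_{v_1},q_{w_1},q_{v_2},q_{w_2}+q_{w_3}+q_{w_4}$ are full and that $q_{v_1'},q_{w_1'}$ generate the unique nontrivial ideal, and matches $K_0$-classes exactly as you do. The construction of $z_0$ as a sum of three partial isometries is identical to yours.

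Your split into ``compact case'' (rank-one in $\K$) and ``full case'' (Cuntz's criterion for properly infinite full projections) is perfectly valid, but the lifting sketch you propose for proper infiniteness is where it gets delicate: having non-zero image in $\mathcal{O}_2$ does not by itself force proper infiniteness upstairs, and ``correcting via functional calculus in $\K$'' is vague. A cleaner way to finish your approach is to exhibit proper infiniteness directly from the graph: each of $v_1,v_2,w_1,\ldots,w_4$ supports two distinct return paths in $(\mathbf{E}_*)_{\{v_2\}}$ resp.\ $(\mathbf{E}_{**})_{\{w_2,w_3,w_4\}}$, so the corresponding vertex projections are properly infinite by the standard argument using the associated partial isometries; the remaining projections are equivalent to orthogonal sums of these. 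Either way, the paper's use of stable weak cancellation sidesteps the issue entirely and is the more economical choice.
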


\begin{proof}
By \cite[Corollary~7.2]{MR2310414}, row-finite graph \cas have stable weak cancellation, so by \cite[Theorem~3.7]{MR2054981}, $\mathcal{E}$ has stable weak cancellation. 
Hence any two projections in $\mathcal{E}$ are Murray-von~Neumann equivalent if they generate the same ideal and have the same $K$-theory class. 

As in the proof of Proposition \ref{prop:csdouble-new}, we will use \cite[Theorem~3.7]{MR2054981} to realize our relative graph \cas as graph \cas of the graphs $(\mathbf{E}_*)_{\{v_2\}}$ and $(\mathbf{E}_{**})_{\{w_2,w_3,w_4\}}$. 
Denote the image of the vertex projections of $C^*((\mathbf{E}_*)_{\{v_2\}})$ inside $\mathcal{E}$ under this isomorphism by $q_{v_1}, q_{v_2}, q_{v_1'}$ and denote the image of the vertex projections of $(\mathbf{E}_{**})_{\{w_2,w_3,w_4\}}$ inside $\mathcal{E}$ under the isomorphisms $(\mathbf{E}_{**})_{\{w_2,w_3,w_4\}}\cong C^*(\mathbf{E}_{**}, \{ w_2,w_3,w_4 \}) \cong\mathcal{E}$ by $q_{w_1}, q_{w_2}, q_{w_3}, q_{w_4}, q_{v_1'}$. 
Using the description of the isomorphism in \cite[Theorem~3.7]{MR2054981}, we see that we need to show that $q_{v_1} \sim q_{w_1}$, $q_{v_1'} \sim q_{w_1'}$ and $q_{v_2}\sim q_{w_2}+q_{w_3}+q_{w_4}$.

Since $(\mathbf{E}_*)_{\{v_2\}}^0$ satisfies Condition~(K) and the smallest hereditary and saturated subset containing $v_1$ is all of $(\mathbf{E}_*)_{\{v_2\}}^0$ we have that $q_{v_1}$ is a full projection (\cite[Theorem~4.4]{MR1988256}).
Similarly $q_{w_1}$, $q_{v_2}$ and $q_{w_2}+q_{w_3}+q_{w_4}$ are full. 
In $K_0(\mathcal{E})$ we have, using our calculations from the proof of Proposition \ref{prop:csdouble-new}, that 
\begin{align*}
	[q_{v_1}] &= [1] = [q_{w_1}], \\
	[q_{v_2}] &= [1] = [q_{w_2}]=[q_{w_2}]+[q_{w_3}]+[q_{w_4}].
\end{align*}
So by stable weak cancellation $q_{v_1} \sim q_{w_1}$ and $q_{v_2}\sim q_{w_2}+q_{w_3}+q_{w_4}$.

Both $q_{v_1'}$ and $q_{w_1'}$ generate the only nontrivial ideal $\mathfrak{I}$ of $\mathcal{E}$ (\cite[Theorem~4.4]{MR1988256}). 
Since that ideal is isomorphic to the compact operators and both $[q_{v_1'}]$ and $[q_{w_1'}]$ are positive generators of  $K_0(\mathfrak{I})\cong K_0(\mathbb{K})\cong \Z$, they must both represent the same class in $K_0(\mathfrak{I})$, and thus also in $K_0(\mathcal{E})$. 
Therefore $q_{v_1'} \sim q_{w_1'}$.

Let $u$, $v$ and $w$ be partial isometries realizing the Murray-von~Neumann equivalences. Then $z_0=u+v+w$ is a unitary that satisfies the required conditions. 
\end{proof}

\begin{theorem}\label{t:cuntz-splice-1}
Let $E$ be a graph and let $u$ be a vertex of $E$.  Then $C^{*}(E_{u,-}) \cong C^{*}(E_{u,--})$.
\end{theorem}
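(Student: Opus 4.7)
The plan is to construct a Cuntz--Krieger $E_{u,-}$-family inside $C^*(E_{u,--})$ (using the intertwining unitary $z_0$ of Lemma~\ref{lem:csmurray-new} as the key ingredient), invoke the universal property to obtain a $*$-homomorphism $\Phi : C^*(E_{u,-}) \to C^*(E_{u,--})$, and then show that $\Phi$ is an isomorphism.

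First I would set up a canonical $*$-homomorphism $\phi: \mathcal{E} \to C^*(E_{u,--})$ by appealing to the universal property of $\mathcal{E} = C^*(\mathbf{E}_{**},\{w_2,w_3,w_4\})$: its canonical generators are realized inside $C^*(E_{u,--})$ by the obvious generators, the CK relations at $w_2,w_3,w_4$ hold in $C^*(E_{u,--})$, and the only required inequality at $w_1$ is automatic. Composing $\phi$ with the isomorphism of Proposition~\ref{prop:csdouble-new}, I obtain elements $P_{v_1}, P_{v_2}, S_{e_1}, \ldots, S_{e_4}$ in $C^*(E_{u,--})$ corresponding to the canonical $\mathbf{E}_*$-generators, along with a unitary $Z_0 := \phi(z_0)$ satisfying the three intertwining identities of Lemma~\ref{lem:csmurray-new}. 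Setting $D_1 := s_{d_1} Z_0$ and $D_2 := Z_0^* s_{d_2}$, a direct computation using these identities shows that
\[
\{p_w\}_{w \in E^0} \cup \{P_{v_1}, P_{v_2}\} \cup \{s_e\}_{e \in E^1} \cup \{S_{e_1}, S_{e_2}, S_{e_3}, S_{e_4}, D_1, D_2\}
\]
is a Cuntz--Krieger $E_{u,-}$-family in $C^*(E_{u,--})$; the only nontrivial checks are $D_1^* D_1 = P_{v_1}$, $D_2 D_2^* = P_{v_1} - S_{e_1} S_{e_1}^* - S_{e_2} S_{e_2}^*$, and the resulting summation $P_{v_1} = S_{e_1}S_{e_1}^* + S_{e_2}S_{e_2}^* + D_2 D_2^*$, all immediate consequences of Lemma~\ref{lem:csmurray-new}. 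The universal property of $C^*(E_{u,-})$ then supplies $\Phi$.

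Surjectivity of $\Phi$ is easy: $Z_0 = \phi(z_0)$ lies in the image of $\phi$ and is a $*$-polynomial in the $P_{v_i}, S_{e_j}$ through the isomorphism of Proposition~\ref{prop:csdouble-new}, so $s_{d_1} = D_1 Z_0^*$, $s_{d_2} = Z_0 D_2$, and the full image of $\phi$ (which contains the remaining $\mathbf{E}_{**}$-generators of $C^*(E_{u,--})$) all lie in the image of $\Phi$.

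The main obstacle is injectivity of $\Phi$. Since $z_0$ comes from Rørdam's classification and is not manifestly gauge-equivariant, the Gauge-Invariant Uniqueness Theorem does not apply directly to $\Phi$; and since the original graph $E$ may contain cycles without exits, $E_{u,-}$ need not satisfy Condition~(L), so the Cuntz--Krieger Uniqueness Theorem is also unavailable. My plan is to use Kirchberg's Embedding Theorem to circumvent this: after a suitable embedding of $C^*(E_{u,--})$ into a Kirchberg-absorbing setting, the unitary $Z_0$ can be connected through a path of unitaries to one that is compatible with the gauge action, producing a gauge-equivariant $*$-homomorphism that agrees with $\Phi$ up to inner perturbation. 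The perturbed map is injective by the Gauge-Invariant Uniqueness Theorem (using also that $\Phi(p_v) \neq 0$ for every vertex $v \in E_{u,-}^0$), and one then concludes that $\Phi$ itself is injective, finishing the proof.
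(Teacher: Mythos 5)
Your construction of the Cuntz--Krieger $E_{u,-}$-family inside $C^*(E_{u,--})$ via the homomorphism $\phi$ and the unitary $Z_0=\phi(z_0)$ is sound, and so is the surjectivity argument (modulo saying ``lies in the $C^*$-subalgebra generated by the $P_{v_i},S_{e_j}$'' rather than ``is a $*$-polynomial''). The genuine gap is the injectivity step, which is exactly the heart of the theorem. The plan of embedding $C^*(E_{u,--})$ into a ``Kirchberg-absorbing setting'' and connecting $Z_0$ by a path of unitaries to one ``compatible with the gauge action'' is not justified and does not obviously work: there is no reason such a path exists within the set of unitaries satisfying the intertwining identities of Lemma~\ref{lem:csmurray-new} (and if those identities are lost along the path, the perturbed elements no longer form a Cuntz--Krieger family); the gauge action of $C^*(E_{u,--})$ does not extend to the ambient algebra supplied by the Kirchberg Embedding Theorem \cite{MR1780426}; and, most importantly, replacing $Z_0$ by a homotopic unitary changes the family $\{D_1,D_2\}$ rather than conjugating the whole map $\Phi$ by a fixed unitary, so injectivity of the ``perturbed'' homomorphism would not transfer back to $\Phi$. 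As written, the argument for injectivity is a sketch of a statement that has not been shown to be true.

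For comparison, the paper never constructs a homomorphism from one algebra to the other at all: it uses the Kirchberg Embedding Theorem to embed $C^*(E_{u,-})\oplus\mathcal{E}$ into $\mathcal{O}_2$, builds there two Cuntz--Krieger families $\mathcal{S}$ (for $E_{u,-}$) and $\mathcal{T}$ (for $E_{u,--}$, obtained by conjugating with $z_0$ viewed in the multiplier algebra), shows $C^*(\mathcal{S})=C^*(\mathcal{T})$, and identifies each with the corresponding graph algebra by Szyma\'nski's Generalized Cuntz--Krieger Uniqueness Theorem \cite{MR1914564}; the spectral hypothesis of that theorem on vertex-simple cycles without exit is what replaces Condition~(L), and it is verified because the relevant cycle isometries come from an injective embedding. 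In fact your own map $\Phi$ can be rescued by the same tool, applied directly: every vertex-simple cycle without exit in $E_{u,-}$ lies entirely in $E$ (cycles through $u$, $v_1$, $v_2$ have exits), so $\Phi$ sends its cycle isometry to the canonical cycle isometry in $C^*(E_{u,--})$, whose spectrum is the full unit circle, and $\Phi(p_v)\neq 0$ for every vertex; hence \cite[Theorem~1.2]{MR1914564} gives injectivity with no homotopy or gauge-equivariance argument needed. So the construction in your proposal is viable and somewhat more economical than the paper's symmetric picture in $\mathcal{O}_2$, but the injectivity argument must be replaced by the generalized uniqueness theorem; the route you propose is a genuine gap.
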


\begin{proof}
As above, we let $\mathcal{E}$ denote the \ca $C^*(\mathbf{E}_{*}, \{v_2\})$, and we choose an isomorphism between $\mathcal{E}$ and $C^*(\mathbf{E}_{**}, \{ w_2,w_3,w_4 \})$, which exists according to Proposition~\ref{prop:csdouble-new}.

Since $C^*(E_{u,-})$ and $\mathcal{E}$ are separable, nuclear \cas, by the Kirchberg Embedding Theorem \cite{MR1780426}, there exist an injective \starhomo
\[
	C^*(E_{u,-}) \oplus \mathcal{E} \hookrightarrow \mathcal{O}_2.
\]
We will suppress this embedding in our notation. 

In $\mathcal{O}_2$, we denote the vertex projections and the partial isometries coming from $C^*(E_{u,-})$ by $p_v, v\in E_{u,-}^0$ and $s_e,e\in E_{u,-}^1$, respectively, and we denote the vertex projections and the partial isometries coming from $\mathcal{E}=C^*(\mathbf{E}_*, \{v_2\})$ by $p_1,p_2$ and $s_1, s_2, s_3, s_4$, respectively.
Since we are dealing with an embedding, it follows from Szyma{\'n}ski's Generalized Cuntz-Krieger Uniqueness Theorem (\cite[Theorem~1.2]{MR1914564}) that for any vertex-simple cycle $\alpha_1 \alpha_2 \cdots \alpha_n$ in $E_{u,-}$ without any exit, we have that the spectrum of $s_{\alpha_1} s_{\alpha_2} \cdots s_{\alpha_n}$ contains the entire unit circle.

We will define a new Cuntz-Krieger $E_{u,-}$-family. 
We let
\begin{align*}
q_v&=p_v&&\text{for each }v\in E^0, \\
q_{v_1}&=p_1, \\
q_{v_2}&=p_2.
\end{align*}
Since any two nonzero projections in $\mathcal{O}_2$ are Murray-von~Neumann equivalent, we can choose partial isometries $x_1, x_2 \in \mathcal{O}_2$ such that 
\begin{align*}
	x_1 x_1^* &= s_{d_1} s_{d_1}^*				& x_1^* x_1 &= p_1 \\
	x_2 x_2^* &= p_1 - (s_1 s_1^* + s_2 s_2^*)	& x_2^* x_2 &= p_u.
\end{align*} 
We let 
\begin{align*}
t_e&=s_e&&\text{for each }e\in E^1, \\
t_{e_i}&=s_i&&\text{for each }i=1,2,3,4, \\
t_{d_1}&=x_1, \\
t_{d_2}&=x_2. 
\end{align*}

By construction $\setof{ q_v }{ v \in E_{u,-}^0 }$ is a set of orthogonal projections, and $\setof{ t_e }{ e \in E_{u,-}^1 }$ a set of partial isometries.
Furthermore, by choice of $\setof{ t_e }{ e \neq d_1,d_2 }$ the relations are clearly satisfied at all vertices other than $v_1$ and $u$. 
The choice of $x_1, x_2$ ensures that the relations hold at $u$ and $v_1$ as well. 
Hence $\{q_v, t_e\}$ does indeed form a Cuntz-Krieger $E_{u,-}$-family. 
Denote this family by $\mathcal{S}$.

Using the universal property of graph \cas, we get a $*$-homomorphism from $C^*(E_{u,-})$ onto $C^*(\mathcal{S}) \subseteq \mathcal{O}_2$.
Let $\alpha_1 \alpha_2 \cdots \alpha_n$ be a vertex-simple cycle in $E_{u,-}$ without any exit. 
Since $u$ is where the Cuntz splice is glued on, no vertex-simple cycle without any exit uses edges connected to $u, v_1$ or $v_2$. 
Hence $t_{\alpha_1} t_{\alpha_2} \cdots t_{\alpha_n} = s_{\alpha_1} s_{\alpha_2} \cdots s_{\alpha_n}$ and so its spectrum contains the entire unit circle. 
It now follows from \cite[Theorem~1.2]{MR1914564} that the \starhomo from $C^*(E_{u,-})$ to $C^*(\mathcal{S})$ is in fact a \stariso. 

Let $\A_0$ be the \ca generated by $\setof{ p_v }{ v \in E^0 }$, and let \A be the subalgebra of $\mathcal{O}_2$ generated by $\setof{ p_v }{ v \in E^0 }$ and $\mathcal{E}$.  Note that $\A=\A_0\oplus\mathcal{E}$.

Let us denote by $\setof{ r_{w_i}, y_{f_j} }{ i=1,2,3,4, j = 1,2,\ldots, 10 }$ the image of the canonical generators of $C^*(\mathbf{E}_{**}, \{ w_2,w_3,w_4 \})$ in $\mathcal{O}_2$ under the chosen isomorphism between $C^*(\mathbf{E}_{**}, \{ w_2,w_3,w_4 \})$ and $\mathcal{E}$ composed with the embedding into $\mathcal{O}_2$. 

By Lemma \ref{lem:csmurray-new}, certain projections in $\mathcal{E}$ are Murray-von~Neumann equivalent, so choose a unitary $z_0 \in \mathcal{E}$ according to this lemma, and set $z=z_0+\sum_{v\in E^0}p_v \in \mathcal{M}(\A)$.  Clearly $z$ is a unitary in $\mathcal{M}( \A )$.  Since the approximate identity of $\A$ given by
\[
\left\{ \sum_{ k = 1 }^n p_{v_k} + 1_\mathcal{E} \right\}_{n \in \N },
\] 
where $\setof{ p_v }{ v \in E^0 } = \{ p_{v_1} , p_{v_2} , \dots \}$, is an approximate identity of $C^*( \mathcal{S} )$, we have a canonical unital $^*$-homomorphism from $\mathcal{M} ( \A )$ to $\mathcal{M} ( C^* (\mathcal{S} ) )$ which, when restricted to $\A$, gives the embedding of $\A$ into $C^*( \mathcal{S})$.  So we can consider $z$ as a unitary in $\mathcal{M} ( C^* ( \mathcal{S} ) )$.  Hence, for all $x \in C^*(\mathcal{S})$, we have that $z x$ and $xz$ are elements of $C^* ( \mathcal{S} )$.  By construction of $z$, we have that 
\begin{align*}
	z q_v  &= q_vz=q_v, \text{ for all } v \in E^0, \\
	z t_e  &= t_ez=t_e, \text{ for all } e \in E^1, \\
	z \left( t_{e_1} t_{e_1}^* + t_{e_2} t_{e_2}^* \right) z^* &= y_{f_1} y_{f_1}^* + y_{f_2} y_{f_2}^* +y_{f_5} y_{f_5}^*, \\
	z \left( q_{v_1} - \left( t_{e_1} t_{e_1}^* + t_{e_2} t_{e_2}^* \right) \right) z^* &= r_{w_1} - \left( y_{f_1} y_{f_1}^* + y_{f_2} y_{f_2}^* +y_{f_5} y_{f_5}^* \right), \\
z q_{v_1} z^* &= r_{w_1}, \\ 
z q_{v_2} z^* &= r_{w_2}+r_{w_3}+r_{w_4}.
\end{align*}

We will now define a Cuntz-Krieger $E_{u,--}$-family in $\mathcal{O}_2$.
We let
\begin{align*}
P_v &= q_v = p_v &&\text{for each }v\in E^0, \\
P_{w_i}&=r_{w_i} &&\text{for each }i=1,2,3,4.
\end{align*}
Moreover, we let
\begin{align*}
S_e&=t_e=s_e &&\text{for each }e\in E^1, \\
S_{f_i}&=y_{f_i} &&\text{for each }i=1,2,\ldots,10, \\
S_{d_1}&=zt_{d_1}z^*=zx_1z^*,\\
S_{d_2}&=zt_{d_2}z^*=zx_2z^*.
\end{align*}
Denote this family by $\mathcal{T}$.

By construction $\setof{ P_v }{ v \in E_{u,--}^0 }$ is a set of orthogonal projections, and $\setof{ S_e }{ e \in E_{u,--}^1}$ a set of partial isometries satisfying
\begin{align*}
S_e^*S_e&=s_e^*s_e=p_{r(e)}, 
& S_eS_e^*&=s_es_e^*, \\
S_{f_i}^*S_{f_i}&=y_{f_i}^*y_{f_i}=r_{f_i}, & 
S_{f_i}S_{f_i}^*&=y_{f_i}y_{f_i}^*, \\ 
S_{d_1}^*S_{d_1}&=r_{w_1}, & S_{d_1}S_{d_1}^*&=s_{d_1}s_{d_1}^*, \\ 
S_{d_2}^*S_{d_2}&=p_u, & S_{d_2}S_{d_2}^*&=r_{w_1}-\left(y_{f_1}y_{f_1}^*+y_{f_2}y_{f_2}^*+y_{f_5}y_{f_5}^*\right), 
\end{align*}
for all $e\in E^1$ and $i=1,2,\ldots,10$. From this, it is clear that $\mathcal{T}$ will satisfy the Cuntz-Krieger relations at all vertices in $E^0$.
Similarly, we see that since $\setof{ r_{w_i}, y_{f_j} }{ i=1,2,3,4, j = 1,2,\ldots, 10 }$ is a Cuntz-Krieger $(\mathbf{E}_{**}, \{ w_2,w_3,w_4 \})$-family, $\mathcal{T}$ will satisfy the relations at the vertices $w_2, w_3, w_4$. 
It only remains to check the summation relation at $w_1$, for that we compute
\begin{align*}
	\smash{\sum_{s_{E_{u,--}}(e) = w_1} S_e S_e^*}	&= S_{f_1} S_{f_1}^* + S_{f_2} S_{f_2}^* + S_{f_5} S_{f_5}^* + S_{d_2} S_{d_2}^* \\
											&= y_{f_1} y_{f_1}^* + y_{f_2} y_{f_2}^* +y_{f_5} y_{f_5}^* + r_{w_1}-\left(y_{f_1}y_{f_1}^*+y_{f_2}y_{f_2}^*+y_{f_5}y_{f_5}^*\right) \\
											&= r_{w_1} = P_{w_1}.
\end{align*}
Hence $\mathcal{T}$ is a Cuntz-Krieger $E_{u,--}$-family. 

The universal property of $C^*(E_{u,--})$ provides a surjective \starhomo from $C^*(E_{u,--})$ to $C^*(\mathcal{T}) \subseteq \mathcal{O}_2$. 
Let $\alpha_1 \alpha_2 \cdots \alpha_n$ be a vertex-simple cycle in $E_{u,--}$ without any exit. 
We see that all the edges $\alpha_i$ must be in $E^1$, and hence we have
\begin{align*}
	S_{\alpha_1} S_{\alpha_2} \cdots S_{\alpha_n} = t_{\alpha_1}t_{\alpha_2} \cdots t_{\alpha_n} = s_{\alpha_1} s_{\alpha_2} \cdots s_{\alpha_n}  
\end{align*} 
and so its spectrum contains the entire unit circle. 
It now follows from \cite[Theorem~1.2]{MR1914564} that $C^*(E_{u,--})$ is isomorphic to $C^*(\mathcal{T})$.

Recall that $z \in \mathcal{M}( C^* ( \mathcal{S} ) )$.  Therefore, $\mathcal{T} \subseteq C^*(\mathcal{S})$ since $\A \subseteq C^*(\mathcal{S})$ and since $r_{w_i}, y_{f_j}\in \mathcal{E} \subseteq C^*(\mathcal{S})$, for $i=1,2,3,4$, $j = 1,2,\ldots, 10$.  So $C^*(\mathcal{T}) \subseteq C^*(\mathcal{S})$.  

Since the approximate identity of $\A$ given by
\[
\left\{ \sum_{ k = 1 }^n p_{v_k} + 1_\mathcal{E} \right\}_{n \in \N },
\] 
where $\setof{ p_v }{ v \in E^0 } = \{ p_{v_1} , p_{v_2} , \dots \}$, is an approximate identity of $C^*( \mathcal{T} )$, we get that for all $x \in C^*(\mathcal{T})$, $z x z^*$ and $z^* x z$ are elements of $C^* ( \mathcal{T} )$.  But since \A is also contained in $C^*(\mathcal{T})$ and $\mathcal{E} \subseteq C^*(\mathcal{T})$, we have that $\mathcal{S} \subseteq C^*(\mathcal{T})$, and hence $C^*(\mathcal{S}) \subseteq C^*(\mathcal{T})$. 
Therefore 
\[
	C^*(E_{u,-}) \cong C^*(\mathcal{S}) = C^*(\mathcal{T}) \cong C^*(E_{u,--}).\qedhere
\]
\end{proof}

The next two results will show that $E \Meq E_{u, -- }$ for a row-finite graph $E$ and a vertex $u \in E^0$ which supports two distinct return paths.   This will be enough to prove our main result since by \cite[Lemma~5.1]{arXiv:1510.06757v2}, there exists a row-finite graph $F$ and a vertex $v$ supporting two distinct return paths such that $C^* ( E_{u, - } ) \otimes \K \cong C^* ( F_{v, - } ) \otimes \K$ and $C^* ( E ) \otimes \K \cong C^* ( F ) \otimes \K$. 

\begin{proposition}\label{prop:cuntzSpliceSetup}Let $E$ be a row-finite graph and let $u$ be a vertex which supports two distinct return paths.  Then there exists a row-finite graph $F$ and a vertex $v \in F^0$ which supports two distinct loops such that $E \Meq F$ and $E_{u, -- } \Meq F_{v, -- }$.
\end{proposition}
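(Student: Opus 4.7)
The plan is to proceed by induction on the minimum of $|\mu|+|\nu|$ taken over all pairs of distinct return paths $\mu,\nu$ at $u$. The base case $|\mu|+|\nu|=2$ means both return paths are loops, and we take $F:=E$, $v:=u$. For the inductive step, after relabelling we may assume $|\mu|=n\geq 2$ and write $\mu=e_1 e_2\cdots e_n$; set $u_1 := r_E(e_1)$. Since $E$ is row-finite and $u_1$ emits $e_2$, the vertex $u_1$ is regular, and $u_1\neq u$ because $\mu$ is a return path based at $u$.

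We split into two cases depending on $s_E^{-1}(u_1)$. If $s_E^{-1}(u_1)=\{e_2\}$, then $u_1$ does not support a loop (otherwise $e_2$ would loop at $u_1$ with no way to leave, so $\mu$ could not close back to $u$), and Move \CO applies to collapse $u_1$ into a graph $E'$ in which the composed edge $[e_1 e_2]$ followed by $e_3,\ldots,e_n$ yields a return path at $u$ of length $n-1$. Otherwise, $u_1$ emits at least one edge besides $e_2$, and we apply Move \OO at $u_1$ with partition $\{e_2\}\sqcup\bigl(s_E^{-1}(u_1)\setminus\{e_2\}\bigr)$, creating two vertices $u_1^1,u_1^2$ in place of $u_1$; the vertex $u_1^1$ is regular, emits only one edge $e_2^1$, and has $r_E(e_2)\neq u_1^1$, so Move \CO collapses $u_1^1$ into a graph $E'$ in which the composed edge $[e_1^1 e_2^1]$, followed by $e_3,\ldots,e_n$, gives a return path at $u$ of length $n-1$ (a loop when $n=2$). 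In either case a suitably lifted version of $\nu$ remains a return path at $u$ in $E'$ and is distinct from the modified $\mu$, so the minimal pair of return paths at $u$ in $E'$ has strictly smaller total length.

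In both cases, the moves are performed at $u_1\in E^0\setminus\{u\}$ and affect only edges incident to $u_1$. Since the Cuntz-spliced piece added at $u$ consists of the vertices $w_1,w_2,w_3,w_4$ and the edges $d_1,d_2,f_1,\ldots,f_{10}$, all of which are incident only to $u$ or to the $w_i$, the same moves apply verbatim to $E_{u,--}$ to yield $(E')_{u,--}$. Hence $E\Meq E'$ and $E_{u,--}\Meq (E')_{u,--}$, and the inductive hypothesis applied to $E'$ produces a row-finite graph $F$ and a vertex $v\in F^0$ supporting two distinct loops, with $E'\Meq F$ and $(E')_{u,--}\Meq F_{v,--}$; chaining the move equivalences gives the result. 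The main technical obstacle is verifying that $u$ still supports two distinct return paths in $E'$ of strictly smaller total length; this requires tracking the transform of $\nu$ under out-splitting and collapse (a case analysis on whether $\nu$ traverses $u_1$ and, if so, on which edge it uses to leave $u_1$) and confirming distinctness from the new $\mu$, which follows from injectivity of these moves on edge sequences outside the modified region.
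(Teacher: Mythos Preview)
Your inductive strategy is sound, but there is a gap in Case~2: you assert that after out-splitting $u_1$ along $\{e_2\}\sqcup\bigl(s_E^{-1}(u_1)\setminus\{e_2\}\bigr)$ the vertex $u_1^1$ emits only the single edge $e_2^1$ and supports no loop. This fails when $e_2$ is itself a loop at $u_1$: a loop lying in the partition class $\mathcal{E}_1$ becomes, under Move \OO, \emph{two} edges out of $u_1^1$ (one to $u_1^1$ and one to $u_1^2$), so $u_1^1$ then supports a loop and cannot be collapsed. A return path need not be vertex-simple, and minimality of $|\mu|+|\nu|$ alone does not force the second edge of $\mu$ to be a non-loop. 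The repair is short: argue that in any minimising pair at least one member of length $\geq 2$ can be taken vertex-simple (if $\mu$ is not, any one-step shortening of it has length $\geq 2$ and by minimality must equal $\nu$; if $\nu$ were also non-vertex-simple, shortening $\nu$ would give a return path of length strictly less than $|\nu|<|\mu|$, hence distinct from $\mu$, contradicting minimality). For a vertex-simple $\mu$ one has $r(e_2)\neq r(e_1)=u_1$, so $e_2$ is not a loop and your out-split/collapse goes through.

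By comparison, the paper's proof avoids out-splitting altogether here. It collapses $r(e_1)$ directly whenever that vertex supports no loop (this already covers both your Cases~1 and~2 when $u_1$ is loop-free --- Move \CO does not require the vertex to emit only one edge), iterates, and when the process stalls because the current neighbour $r(f_1)$ of $u$ \emph{does} support a loop, it invokes Proposition~\ref{prop:columnAdd} to add the $r(f_1)$-column into the $u$-column of $\Bsf_E$ twice, producing two loops at $u$ in one stroke. Your route is more self-contained in that it never appeals to the column-addition machinery of Section~\ref{addmoves}, but it requires the vertex-simplicity fix above and the explicit tracking of $\nu$ through the moves; the paper offloads that bookkeeping to Proposition~\ref{prop:columnAdd}.
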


\begin{proof}
Throughout the proof, we will freely use the following fact:  Let $G$ be a graph and let $w$ be a vertex and let $w' \neq w$ be a regular vertex that does not support a loop.  Let $G'$ be the resulting graph after collapsing $w'$.  Then $G \Meq G'$ and $G_{w, -- } \Meq G'_{w, -- }$.  

Suppose $u \in E^0$ supports two loops.  Then set $E = F$ and $v = u$.  Suppose $u$ does not support two loops.  Then there exists a return path $\mu = e_1 e_2 \cdots e_n$ with $n \geq  2$.  Starting at $r(e_1)$, if $r( e_1 )$ does not support a loop, we collapse the vertex $r( e_1)$.  Doing this will result in reducing the length on $\mu$.  Note that we may have also added a loop at $u$.  Continuing this procedure, we have obtained a graph $E'$ with $u$ in $(E')^0$ such that $E \Meq E'$,  $E_{u, -- } \Meq E'_{u, -- }$, and either $u$ supports two loops or $u$ supports a return path $\nu = f_1 f_2 \dots f_m$ with $m \geq 2$, with $r( f_1 )$ supporting a loop.

If $u$ supports two loops, set $F = E'$ and $v = u$.  Suppose $u$ supports a return path $\nu = f_1 f_2 \dots f_m$ with $m \geq 2$, with $r( f_1 )$ supporting a loop.  Then by Proposition~\ref{prop:columnAdd}, we add the $r( f_1 )$'th column to the $u$'th column twice, to get a graph $F$ with $u \in F^0$ supporting two loops such that $F \Meq E'$.  Note that we may perform the same matrix operations to $\Bsf_{E'_{u, -- } }$ and get that $E'_{u, -- } \Meq F_{u, -- }$.  Set $v = u$.  

We have just obtained the desired graph $F$ and the desired vertex $v \in F^0$ since $E \Meq E' \Meq F$ and $E_{u, -- } \Meq E'_{u , -- } \Meq F_{v, -- }$.
\end{proof}

We now show that performing the Cuntz splice twice is a legal move for a row-finite graph.

\begin{proposition} \label{prop:cuntzsplicetwice}
Let $E$ be a row-finite graph and let $v$ be a vertex that supports at least two distinct return paths. 
Then $E \Meq E_{v,--}$.  
\end{proposition}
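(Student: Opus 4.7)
The plan is to exhibit $E_{v,--}$ as move equivalent to $E$ by an explicit finite chain of elementary operations on $\Bsf_{E_{v,--}}$. First I would invoke Proposition \ref{prop:cuntzSpliceSetup} to reduce to the case that $v$ supports two distinct loops in $E$, so in particular $\Bsf_E(v,v) \geq 1$. This gives the flexibility to use $v$ as a pivot for row and column additions coming from Propositions \ref{prop:rowAdd} and \ref{prop:columnAdd}, since the presence of a loop at the pivot vertex is a standing hypothesis of both.

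The key observation is that every entry of $\Bsf_{E_{v,--}}$ outside the $5\times 5$ block indexed by $\{v, w_1, w_2, w_3, w_4\}$ already agrees with the corresponding entry of $\Bsf_E$, so the task reduces to eliminating the four rows and columns indexed by the $w_i$'s. Algebraically this is feasible, since $\Bsf_{\mathbf{E}_{**}}$ has determinant $\pm 1$ and is therefore invertible over $\Z$, so some integer-valued sequence of row and column operations realizes the reduction. My proposal is to carry out such a reduction by a carefully chosen sequence of row and column additions and their backward versions (Remarks \ref{rmk:columnAdd} and \ref{rmk:rowAdd}), until each $w_i$ becomes a regular vertex supporting no loop; four successive applications of Move \CO\ then eliminate them and return a graph isomorphic to $E$.

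The main obstacle is combinatorial rather than algebraic: each row or column addition requires both a loop at the pivot vertex and a suitable path to the target, while each subtraction demands that the reverse addition would be legal and that the intermediate matrix stays non-negative. Since each $w_i$ starts with exactly one loop and short connecting edges, naive diagonal-reducing subtractions immediately produce negative entries. One must therefore first inflate the diagonals, drawing on the two loops at $v$ and on the connecting edges $d_1, d_2$ through the chain $v \to w_1 \to w_2$ and $v \to w_1 \to w_3 \to w_4$, before the targeted cancellations can be performed legally. I would expect the proof to consist of an explicit but relatively short sequence of operations, possibly interleaved with outsplittings or insplittings at the $w_i$'s to create temporary regular non-loop vertices that are subsequently collapsed by Move \CO, each step verified individually against the hypotheses of the row and column addition propositions.
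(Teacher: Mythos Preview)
Your proposal correctly identifies the toolkit (Propositions~\ref{prop:columnAdd} and~\ref{prop:rowAdd}, their backward versions, and Move~\CO) and the reduction via Proposition~\ref{prop:cuntzSpliceSetup}, and the paper's proof is indeed an explicit finite chain of exactly these operations. Where your plan diverges from the paper is in its one-sided architecture. You intend to work entirely on $\Bsf_{E_{v,--}}$, stripping away all four $w_i$'s until only $\Bsf_E$ remains. The paper instead works from both ends and meets in the middle: it reduces $\Bsf_{E_{v,--}}$ by row/column operations and two collapses down to a matrix $B_{10}$ with \emph{two} extra vertices, and separately it \emph{enlarges} $E$ by an insplit at $v$ followed by an inverse collapse, then manipulates that to reach the same matrix $B_{10}$ (up to relabelling).

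This two-sided trick is what makes the combinatorics go through, and it is the main idea your outline is missing. A purely one-sided reduction would, after the final four collapses, have to land exactly on $\Bsf_E$; but collapsing a vertex $w_i$ that still has edges to or from $v$ alters the $(v,v)$ entry and the $v$-row and $v$-column, so you would first need to sever all connections between $\{w_i\}$ and $E^0$ by legal subtractions. Given that $\Bsf_E(v,v)$ is an arbitrary integer $\geq 1$ and the $v$-row of $\Bsf_E$ is arbitrary, it is not clear this can be arranged within the constraints of Remarks~\ref{rmk:columnAdd} and~\ref{rmk:rowAdd}. By contrast, the paper sidesteps the issue: the insplit on the $E$ side creates two auxiliary vertices whose rows and columns can be tuned freely, so the target $B_{10}$ is something both sides can reach. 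Your suggestion of interleaving in/outsplittings at the $w_i$'s is in the right spirit, but the decisive splitting in the paper is at $v$, on the $E$ side, not at the $w_i$'s.
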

\begin{proof}
According to Proposition~\ref{prop:cuntzSpliceSetup}, we can assume that $E$ satisfies the conditions of that proposition --- so we assume that $v$ is a regular vertex that supports at least two loops. 

For a given matrix size $N \in \N \cup \{ \infty \}$ and $i,j\in\{1,2,\ldots,N\}$, we let $E_{(i,j)}$ denote the $N\times N$ matrix that is equal to the identity matrix everywhere except for the $(i,j)$'th entry, that is $1$.
If $B$ is a $N\times N$ matrix, then $E_{(i,j)}B$ is the matrix obtained from $B$ by adding $j$'th row into the $i$'th row, and $BE_{(i,j)}$ is the matrix obtained from $B$ by adding $i$'th column into the $j$'th column. 
Using $E_{(i,j)}^{-1}$ instead will yield subtraction. 
In what follows we will make extensive use of the results from Section \ref{addmoves}, but we do so implicitly since we feel it will only muddle the exposition if we add all the references in. 

Note that $\Bsf_{E_{v,--}}$ can be written as 
$$B_1=
\begin{pmatrix}
\begin{pmatrix}
0 & 1 & 0 & 0 \\
1 & 0 & 1 & 0 \\
0 & 1 & 0 & 1 \\
0 & 0 & 1 & 0 
\end{pmatrix}  &
\begin{pmatrix} 
0 & 0 & \cdots  \\
0 & 0 & \cdots  \\
1 & 0 & \cdots  \\
0 & 0  & \cdots  
\end{pmatrix} \\
\begin{pmatrix}
0 & 0 & 1 & 0 \\
0 & 0 & 0 & 0\\ 
\vdots &\vdots &\vdots &\vdots \\
\end{pmatrix} & 
\Bsf_E
\end{pmatrix}
$$

Now let 
$B_2=E_{(3,2)}B_1$ and 
$B_3=B_2E_{(2,1)}^{-1}$. 
Then $B_1 + I \Meq B_2 + I \Meq B_3 + I$. 
We have that 
$$
B_3 = 
\begin{pmatrix}
\begin{pmatrix}
-1& 1 & 0 & 0 \\
1 & 0 & 1 & 0 \\
0 & 1 & 1 & 1 \\
0 & 0 & 1 & 0
\end{pmatrix} &  \begin{pmatrix} 
0 & 0 & \cdots  \\
0 & 0 &\cdots  \\
1 & 0 & \cdots  \\
0 & 0 & \cdots  
\end{pmatrix} \\
\begin{pmatrix}
0 & 0 & 1 & 0 \\
0 & 0 & 0 & 0 \\
\vdots &\vdots &\vdots &\vdots \\
\end{pmatrix} & 
\Bsf_E
\end{pmatrix}
$$
The $1$st vertex in $\Esf_{B_3 + I}$ does not support a loop, so it can be collapsed yielding
$$
B_4 = 
\begin{pmatrix}
\begin{pmatrix}
1 & 1 & 0 \\
1 & 1 & 1  \\
 0 & 1 & 0
\end{pmatrix} &  \begin{pmatrix} 
0 & 0 & \cdots  \\
1 & 0 & \cdots  \\
0 & 0 & \cdots  
\end{pmatrix} \\
\begin{pmatrix}
 0 & 1 & 0 \\
  0 & 0 & 0 \\
  \vdots &\vdots &\vdots \\
\end{pmatrix} & 
\Bsf_E
\end{pmatrix}
$$

With $B_4 + I \Meq B_3 + I$. 
Now we let 
$B_5=E_{(2,3)}^{-1}B_4$, 
$B_6=E_{(4,1)}B_5$, 
$B_7=E_{(4,3)}^{-1}E_{(4,3)}^{-1}B_6$, 
$B_8=E_{(1,2)}B_7$ and 
$B_9=B_8E_{(2,3)}^{-1}$. 
We then have 
$B_4 + I \Meq B_5 + I \Meq B_6 + I \Meq B_7 + I \Meq B_8 + I\Meq B_9 + I$. 
We have that 
$$
B_9 = 
\begin{pmatrix}
\begin{pmatrix}
2 & 1 & 0 \\
1 & 0 & 1  \\
 0 & 1 & -1
\end{pmatrix} &  \begin{pmatrix} 
1 & 0 & \cdots  \\
1 & 0 & \cdots  \\
0 & 0 & \cdots  
\end{pmatrix} \\
\begin{pmatrix}
 1 & 0 & 0 \\
  0 & 0 & 0 \\
\vdots &\vdots &\vdots \\
\end{pmatrix} & 
\Bsf_E
\end{pmatrix}
$$
In $\Esf_{B_9 + I}$ the $3$rd vertex does not support a loop, so it can be collapsed to yield
$$
B_{10} = 
\begin{pmatrix}
\begin{pmatrix}
2 & 1  \\
1 & 1  
\end{pmatrix} &  \begin{pmatrix} 
1 & 0 & \cdots  \\
1 & 0 & \cdots  \\
\end{pmatrix} \\
\begin{pmatrix}
 1 & 0  \\
 0 & 0  \\
\vdots &\vdots  \\
\end{pmatrix} & 
\Bsf_E
\end{pmatrix}
$$
with $B_9 + I \Meq B_{10} + I$. 

Now we look at the graph $E$ again, and and let $\Bsf_E = (b_{ij})$. 
Since the vertex $v$ (number $1$) has at least two loops, we have $b_{11}\geq 1$. 
Now we can insplit by partitioning $r^{-1}(v)$ into two sets, one with a single set consisting of a loop based at $v$, and the other the rest. 
In the resulting graph, $v$ is split into two vertices $v^1$ and $v^2$, and let $E'$ denote the rest of the graph. The vertex $v^1$ has the same edges in and out of $E'$ as $v$ had, but it has only $b_{11}$ loops. There is one edge from $v^1$ to $v^2$ and $v^2$ has one loop and there are $b_{11}$ edges from $v^2$ to $v^1$ as well as all the same edges going from $v^2$ into $E'$ as originally from $v$. 
Use the inverse collapse move to add a new vertex $u$ to the middle of the edge from $v^1$ to $v^2$ and call the resulting graph $F$. 
Label the vertices such that  $v^2$, $u$ and $v^1$ are the $1$st, $2$nd and $3$rd vertices, then $\Bsf_F$ is:
$$\Bsf_F =
\begin{pmatrix}
\begin{pmatrix}
0 & 0  \\
1 & -1  
\end{pmatrix} &  \begin{pmatrix} 
b_{11} & b_{12} & \cdots  \\
0 & 0 & \cdots  \\
\end{pmatrix} \\
\begin{pmatrix}
 0 & 1  \\
 0 & 0  \\
\vdots &\vdots  \\
\end{pmatrix} & 
\widetilde{B}
\end{pmatrix}
$$
where $\widetilde{B}$ is $\Bsf_E$ except for on the $(1,1)$'th entry, which is $b_{11}-1$.   
Note that $b_{11}-1\geq 0$, so that there is still a loop based at the $3$rd vertex.  Also, note that since $E$ is a row-finite graph, the $b_{1k}$'s are eventually zero.
This is important since it allows us to do the following matrix manipulations. 
Let $C_2=\Bsf_F E_{(1,2)}E_{(1,2)}$, 
$C_3=E_{(1,2)}C_2$, 
$C_4=E_{(1,3)}^{-1}C_3$,
$C_5=C_4E_{(2,3)}$ and
$C_6=C_5E_{(1,2)}$. 
We have that
$C_1 + I \Meq C_2 + I \Meq C_3 + I \Meq C_4 + I \Meq C_5 + I \Meq C_6 + I$. 
The matrix $C_6$ 
$$
C_6 = 
\begin{pmatrix}
\begin{pmatrix}
1 & 1  \\
1 & 2  
\end{pmatrix} &  \begin{pmatrix} 
1 & 0 & \cdots  \\
1 & 0 & \cdots  \\
\end{pmatrix} \\
\begin{pmatrix}
 0 & 1  \\
 0 & 0  \\
\vdots &\vdots  \\
\end{pmatrix} & 
\Bsf_E
\end{pmatrix}.
$$
Therefore, $C_6$ is in fact equivalent to $B_{10}$ upon relabeling of the first two vertices, thus it follows, that $E\Meq E_{v,--}$. 
\end{proof}

Thus we have the following fundamental result.

\begin{theorem}\label{theorem:cuntzspliceinvariant}
Let $E$ be a graph and let $v$ be a vertex that supports at least two distinct return paths. 
Then $C^*(E)\otimes\K\cong C^*(E_{v,-})\otimes\K$.
\end{theorem}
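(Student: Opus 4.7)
The plan is to assemble the three ingredients already in place. First, the paper announces (just before Proposition~\ref{prop:cuntzSpliceSetup}) that by \cite[Lemma~5.1]{arXiv:1510.06757v2} we can pass from an arbitrary countable graph $E$ with a designated vertex $u$ supporting two distinct return paths to a row-finite graph $F$ with a vertex $v$ supporting two distinct return paths such that
\begin{equation*}
C^*(E)\otimes\K\cong C^*(F)\otimes\K
\qquad\text{and}\qquad
C^*(E_{u,-})\otimes\K\cong C^*(F_{v,-})\otimes\K.
\end{equation*}
So it suffices to prove the theorem under the extra assumption that the graph is row-finite.

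Now assume $E$ is row-finite and $v\in E^{0}$ supports at least two distinct return paths. By Proposition~\ref{prop:cuntzsplicetwice}, Cuntz splicing twice at $v$ is a consequence of the standard moves, so
\begin{equation*}
E \Meq E_{v,--}.
\end{equation*}
Applying Theorem~\ref{thm:moveimpliesstableisomorphism} we obtain $C^{*}(E)\otimes\K\cong C^{*}(E_{v,--})\otimes\K$. On the other hand, Theorem~\ref{t:cuntz-splice-1} gives $C^{*}(E_{v,-})\cong C^{*}(E_{v,--})$, and in particular
\begin{equation*}
C^{*}(E_{v,-})\otimes\K\cong C^{*}(E_{v,--})\otimes\K.
\end{equation*}
Chaining these two identifications yields $C^{*}(E)\otimes\K\cong C^{*}(E_{v,-})\otimes\K$ in the row-finite case, and combining with the reduction in the first paragraph finishes the general case.

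There is no serious obstacle left at this stage: every genuinely difficult step has been done already. The two analytic miracles were packed into Theorem~\ref{t:cuntz-splice-1}, whose proof relied on the Kirchberg Embedding Theorem together with the delicate Murray--von~Neumann equivalence data from Lemma~\ref{lem:csmurray-new}, and into Proposition~\ref{prop:cuntzsplicetwice}, whose proof is a combinatorial tour through row and column operations using Section~\ref{addmoves}. The only minor care needed here is to verify that the vertex $v$ produced by the reduction to the row-finite case indeed supports \emph{two distinct return paths}, which is exactly the hypothesis supplied by \cite[Lemma~5.1]{arXiv:1510.06757v2}, so that Proposition~\ref{prop:cuntzsplicetwice} applies.
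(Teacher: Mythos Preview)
Your proposal is correct and follows essentially the same approach as the paper's own proof: reduce to the row-finite case via \cite[Lemma~5.1]{arXiv:1510.06757v2}, then combine Theorem~\ref{t:cuntz-splice-1} with Proposition~\ref{prop:cuntzsplicetwice} to conclude. The only cosmetic difference is that you explicitly invoke Theorem~\ref{thm:moveimpliesstableisomorphism} to pass from $E\Meq E_{v,--}$ to stable isomorphism, whereas the paper folds that step directly into its citation of Proposition~\ref{prop:cuntzsplicetwice}.
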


\begin{proof}
By \cite[Lemma~5.1]{arXiv:1510.06757v2}, we may assume that $E$ is a graph with no singular vertices, in particular, $E$ is a row-finite graph.   By Theorem~\ref{t:cuntz-splice-1}, $C^{*} ( E_{v, -} ) \cong C^{*} ( E_{v,- -} )$ and hence, $C^{*} ( E_{v, -} ) \otimes \K \cong C^{*} ( E_{v,- -} ) \otimes \K$.  By Proposition~\ref{prop:cuntzsplicetwice}, $C^{*} (E) \otimes \K \cong C^{*} ( E_{v, - - } ) \otimes \K$.  Thus, $C^*(E)\otimes\K\cong C^*(E_{v,-})\otimes\K$.
\end{proof}


\section*{Acknowledgements}

This work was partially supported by the Danish National Research Foundation through the Centre for Symmetry and Deformation (DNRF92), by VILLUM FONDEN through the network for Experimental Mathematics in Number Theory, Operator Algebras, and Topology, by a grant from the Simons Foundation (\# 279369 to Efren Ruiz), and by the Danish Council for Independent Research | Natural Sciences. 

Part of this work was initiated while all four authors were
attending the research program \emph{Classification of operator algebras: complexity,
rigidity, and dynamics} at the Mittag-Leffler Institute, January--April 2016.  The authors would also like to thank Aidan Sims for many fruitful discussions.


\providecommand{\bysame}{\leavevmode\hbox to3em{\hrulefill}\thinspace}
\providecommand{\MR}{\relax\ifhmode\unskip\space\fi MR }
\providecommand{\MRhref}[2]{%
  \href{http://www.ams.org/mathscinet-getitem?mr=#1}{#2}
}
\providecommand{\href}[2]{#2}

\end{document}